\newtheorem{lemma}{Lemma}
\newtheorem{theorem}{Theorem}
\newtheorem{remark}{Remark}
\newtheorem{definition}{Definition}
\newtheorem{corollary}{Corollary}
\newtheorem{proposition}{Proposition}
\newcommand{\diag}{\text{diag}}
\newcommand{\NN}{\mathbb{N}}
\newcommand{\RR}{\mathbb{R}}
\newcommand{\CC}{\mathbb{C}}
\newcommand{\refeq}[1]{(\ref{#1})}
\begin{document}

\title{\itshape Quantum Gate Generation by $T$-Sampling Stabilization}

\author{
Hector Bessa  Silveira\thanks{Department of Automation and Systems (DAS),
Federal University of Santa Catarina (UFSC), Florian\'opolis, Brazil}
\and
Paulo Sergio  Pereira da Silva\thanks{Escola Polit\'{e}cnica -- PTC,
University of S\~ao Paulo (USP), S\~ao
Paulo, Brazil.}
\and
 Pierre  Rouchon\thanks{Centre Automatique et Syst\`emes, Mines ParisTech, Paris, France}
 }

\maketitle

\begin{abstract}
This paper considers right-invariant and controllable driftless
quantum systems with state $X(t)$ evolving  on the unitary group $\mbox{U}(n)$ and $m$
inputs $u=(u_1, \dots, u_m)$. The $T$-sampling stabilization problem is introduced and solved: given any initial condition $X_0$ and any goal state $X_{goal}$,
find a control law $u=u(X,t)$ such that $\lim_{j \to \infty} X(jT) = X_{goal}$ for the closed-loop system.
The purpose is to generate arbitrary quantum gates corresponding to $X_{goal}$. This is achieved by the tracking of $T$-periodic reference trajectories $(\overline{X}_{\mathbf{a}}(t), \overline{u}_{\mathbf{a}}(t))$ of the quantum system that pass by $X_{goal}$ using the framework of Coron's Return Method. The $T$-periodic reference trajectories $\overline{X}_{\mathbf{a}}(t)$ are generated by applying controls $\overline{u}_{\mathbf{a}}(t)$ that are a sum of a finite number $M$ of harmonics of $\sin(2\pi t / T)$, whose amplitudes are parameterized by a vector $\mathbf{a}$. The main result establishes that, for $M$ big enough, $X(jT)$ exponentially converges towards $X_{goal}$ for almost all fixed $\mathbf{a}$, with explicit and completely constructive control laws.
This paper also establishes a stochastic version of this deterministic control
law. The key idea is to randomly choose a different parameter vector of control
amplitudes $\mathbf{a}=\mathbf{a}_j$ at each $t=jT$, and keeping it fixed for $t
\in [jT, (j+1)T)$. It is shown in the paper that $X(jT)$ exponentially converges
towards $X_{goal}$ almost surely. Simulation results have indicated that the
convergence speed of $X(jT)$ may be significantly improved with such stochastic
technique. This is illustrated in the generation of the C--NOT quantum logic
gate on $\mbox{U}(4)$.

\bigskip

\paragraph{Keywords:} controllable right-invariant driftless systems; Coron's Return Method; LaSalle's invariance
theorem; quantum control; unitary group; stochastic stability.

\end{abstract}

\section{Introduction}\label{pmpp}

Consider a nonlinear driftless system of the form
\begin{equation}\label{mainsystem}
    \dot{X}(t) = \sum_{k=1}^m u_k(t) g_k(X(t)), \quad X(0)=X_0,
\end{equation}
where $X$ is the state and $u=(u_1, \dots, u_m)$ is the control, and assume that $X_{goal}$ is a given goal state.
The following problem is introduced in this paper:
\begin{definition} (\emph{$T$-sampling stabilization problem}) Given an initial condition $X_0$,
find a control law $u=u(X,t)$ such that $\lim_{j \to \infty} X(jT) = X_{goal}$ for the closed-loop system.
\end{definition}
The reason for considering such relaxed version
of the stabilization problem relies on the known obstructions that arises in the context of driftless systems \cite{Bro83,Pom92}. However, it is possible to stabilize such
systems by means of smooth time-varying feedbacks that are time-periodic \cite{Cor92, Pom92}, but  exponential convergence is impossible with such  smooth control law.  For the less restrictive $T$-sampling stabilization problem,  this paper proves that exponential convergence can be obtained.

In this work one solves the $T$-stabilization problem for controllable right-invariant driftless quantum models evolving on $\mbox{U}(n)$. The purpose is to generate arbitrary quantum gates corresponding to $X_{goal}$. For this end, one considers the tracking of $T$-periodic reference trajectories $(\overline{X}_{\mathbf{a}}(t), \overline{u}_{\mathbf{a}}(t))$ of system \eqref{mainsystem} that pass by $X_{goal}$ using the framework of Coron's Return Method  \cite{Cor92} (see also
\cite{Cor99, Cor07}). The $T$-periodic reference trajectories $\overline{X}_{\mathbf{a}}(t)$ are generated by applying controls $\overline{u}_{\mathbf{a}}(t)$ that are a sum of a finite number $M$ of harmonics of $\sin(2\pi t / T)$, whose amplitudes are parameterized by a vector $\mathbf{a}$. The main deterministic result shows that, for $M$ big enough, $X(jT)$ exponentially converges towards $X_{goal}$ for almost all fixed $\mathbf{a}$, with explicit and completely constructive control laws.
A new Lyapunov function $\mathcal{V}$ that is inspired in a homographic function is introduced. The advantage of such $\mathcal{V}$ with respect to the standard fidelity functions, is that $\mathcal{V}$ decreases without singularities and with no nontrivial LaSalle's invariants, which is an advantage when compared
with other previous results in the literature \cite{Pav02,SilPerRou09,
SilPerRou12}.
The key ingredients of the stability proof is the $T$-periodic version of LaSalle's theorem \cite{LaSalle}, Coron's Return Method \cite{Cor92} and the stabilization techniques of \cite{JurQui78}.

This paper also establishes a stochastic version of the deterministic control law described above\footnote{A brief
summary  of this result without proofs was presented in \cite{SilPerRou13}.}. This is achieved by randomly choosing a different amplitudes vector $\mathbf{a}=\mathbf{a}_j$ at each $t=jT$, and keeping it fixed for $t \in [jT, (j+1)T)$. The main stochastic result shows that $X(jT)$ converges exponentially towards $X_{goal}$ almost surely. The proof relies on a well-known stochastic version of LaSalle's theorem \cite{Kushner71}. Simulation results have indicated that the convergence speed of $X(jT)$ may be significantly improved with such stochastic technique.

The deterministic and the stochastic methods described above are both local in nature, since they require that $X_{goal}$ must not have any eigenvalue equal to $-1$.
However, it is shown that the $T$-stabilization problem may be easily solved globally in a two-step procedure. Furthermore, if one admits a global phase change of $X_{goal}$, which is transparent for quantum systems, then the proposed strategy becomes a global exponential solution of the $T$-sampling stabilization problem.

It follows from the principles of quantum mechanics (in the
Copenhagen interpretation) that the (deterministic) Schr\"odinger
equation that governs its dynamics is valid as long as the system
remains isolated from external measurements. More precisely, a
measurement causes a collapse in (reduction of) the state of the
system, and the relation between the state at the moment of a
measurement and the one immediately after it can no longer be
deduced from the Schr\"odinger equation, but only in a probabilistic
manner by the Projection Postulate (see e.g.\ \cite{Sud86}).
Therefore, feedback control techniques cannot be directly applied.
However, it is possible to perform a computer simulation, and then
apply the recorded inputs in the real quantum system as being an
open-loop control. In the case of quantum systems consisting of
$n$-qubits\footnote{The qubit (quantum bit) is the quantum analog of
the usual bit in classical computation theory \cite{NieChu00}.},
the present method  allows one to generate, in an approximate
manner, arbitrary quantum logic gates that operate on $n$-qubits.

The $T$-sampling stabilization control problem here treated for driftless systems of the
form \eqref{mainsystem} evolving on $\mbox{U}(n)$ can be regarded as a generalization of the motion planning
control problem. The latter was considered for systems evolving on $\mbox{SU(n)}$ in \cite{Spi02}
using optimal control theory, solved in an approximate manner in
\cite{LeoKri95}
based on averaging techniques (see also \cite{SahSal01}
for the case of quantum systems)
and treated in \cite{PerRou08b} for a single qubit quantum system that evolves
on $\mbox{SU}(2)$ by means of a flatness approach.
Based on decompositions
of the Lie group $\mbox{SU}(n)$, \cite{Ale08} (see also the references therein)
considers the problem of finding piecewise-constant inputs
that steer the state of quantum systems of the form (\ref{mainsystem}) with a drift term to an arbitrary final state $X_f \in \mbox{SU}(n)$
in some finite instant of time.
It also treats the problem of reaching a given final state in minimum time.

The paper is organized as follows. Section~\ref{mrs} develops the proposed solution for the $T$-sampling stabilization problem of controllable driftless quantum systems evolving on $\mbox{U}(n)$. The deterministic and stochastic control laws are exhibited in Sections~\ref{mrs}~and~\ref{stochaticlawsalgorithm}, respectively. Section~\ref{qma} exhibits the obtained simulation results in the generation of the C--NOT (Controlled-NOT) quantum logic gate for
a quantum system evolving on $\mbox{U}(4)$ ($n=4$). A comparison between the deterministic and the stochastic control strategies is presented. The main deterministic and stochastic results are given in Section~\ref{deterministicstochasticresults}, and their proofs are developed in Section~\ref{mrtd}~and~\ref{proofstochastic}, respectively.
Section~\ref{proofexpconv} shows that the convergence of both methods, deterministic and stochastic, are exponential. The conclusions are
stated in Section~\ref{sConclusions}. Finally, some proofs and intermediate results were deferred to Appendix.

\section{$T$-Sampling Stabilization of Quantum Models}\label{mainresult}

Consider a right-invariant and controllable driftless (homogeneous) quantum
system
of the form
\begin{equation}\label{cqs}
    \dot{X}(t)= -\iota \sum_{k=1}^{m}u_k(t) H_k X(t) =
\sum_{k=1}^{m}u_k(t) S_k X(t), 
\end{equation}
where $X \in \mbox{U}(n)$ is the state, $\iota \in \CC$ is
the imaginary unit, $S_k = -\iota H_k \in \mathfrak{u}(n)$,
$\mathfrak{u}(n)$ is the Lie algebra associated to the special
unitary group $\mbox{U}(n)$, $u_k \in \RR$ are the controls
and $I$ is the identity matrix\footnote{Recall that a $n$-square
complex matrix $X$ belongs to $\mbox{U}(n)$ if and only if $X^\dag
X = I$, and $S$ is in $\mathfrak{u}(n)$ if and
only if $S^\dag = - S$ (skew-Hermitian), where $S^\dag$ is the conjugate transpose of $S$.}.
The approach for solving the $T$-sampling stabilization problem on $\mbox{U}(n)$ for system \eqref{cqs} is described as follows. Given
any initial condition $X(0)=X_0 \in \mbox{U}(n)$ in \eqref{cqs},
any goal state $X_{goal} \in \mbox{U}(n)$ and any $T >
0$, find a smooth $T$-periodic reference trajectory
$\overline{X}$:~$\RR_+ \rightarrow \mbox{U}(n)$ with
$\overline{X}(0)=X_{goal}$, and determine
piecewise-smooth control laws $u_k$:~$\RR_+ \rightarrow
\RR$, $k = 1, \dots, m$, in a manner that
\begin{align}\label{ctez-i}
    \lim_{t \rightarrow \infty} \widetilde X (t) =  \lim_{t \rightarrow \infty} \overline{X}^\dag(t)X(t) = I.
\end{align}
This is illustrated in Figure~\ref{espiral}. In particular, for the sequence of
samples $X(jT)$ one
has
\begin{equation}\label{cgs-i}
    \lim_{j \rightarrow \infty} X(j T) = X_{goal},
\end{equation}
and $T$-sampling stabilization is achieved.

\begin{figure}[!hthb]\label{espiral}
    \centerline{\includegraphics[width=0.4\textwidth]{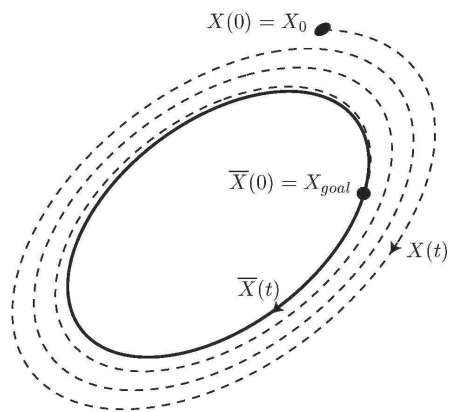}}
    \caption{Proposed approach to solve the $T$-sampling stabilization problem.}\label{pmpcpi}
\end{figure}

One denotes by $\NN$ the set of natural numbers (including
zero), by $\CC^{n \times n}$ the real Banach space of $n \times
n$-square matrices with complex entries endowed with the Frobenius
norm $\| \cdot \|$, and by $\mbox{Tr}(X)$ the trace of $X \in \CC^{n
\times n}$. If $X \in \CC^{n \times n}$ and $\Omega \subset \CC^{n
\times n}$ is nonempty, $d(X,\Omega) \triangleq \inf_{Y \in \Omega}
\| Y - X \|$. The Lie algebra generated by the
$S_k$'s in \eqref{cqs} is denoted as $\mbox{Lie}\{S_1, \dots,
S_m\}$. For simplicity, it will be assumed throughout this paper
that $n \geq 1$, $T > 0$ and the goal state $X_{goal} \in
\mbox{U}(n)$ are fixed. Moreover, unless
otherwise stated, one assumes that the initial condition of
\eqref{cqs} is $X(0)=X_0$, where $X_0 \in \mbox{U}(n)$ is arbitrary but fixed.  The controllability assumption means that the Lie algebra generated
by the $S_k$'s in \eqref{cqs} coincides with $\mathfrak{u}(n)$ \cite{JurSus72}.

\subsection{Deterministic Control Laws}\label{mrs}

Consider system \eqref{cqs}. Take $T > 0$ and set $\omega=2\pi/T$.
Fix an integer $M > 0$ and choose $a_{k,\ell} \in \RR$ for
$k=1,\dots,m$, $\ell=1, \dots, M$. Consider the $T$-periodic
continuous reference controls
\begin{equation}\label{refcon}
  \overline{u}_k(t) = \sum_{\ell=1}^{M} a_{k,\ell} \sin(\ell
\omega t), \quad t \in \RR, \; k=1,\dots,m,
\end{equation}
and the associated reference trajectory $\overline{X}(t) \in
\mbox{U}(n)$, $t \in \RR$, solution of
\begin{equation}\label{rsys}
 \dot{\overline{X}}(t) = \sum_{k=1}^{m} \overline{u}_k(t) S_k
\overline{X}(t), \quad \overline{X}(0)=X_{goal} \in \mbox{U}(n).
\end{equation}
This means that $\overline{X}(t)$ is the solution of \eqref{cqs}
with $u_k=\overline{u}_k$ and initial condition $X_{goal}$ at $t=0$.
Since $\overline{u}_k(T-t)=-\overline{u}_k(t)$ for $t \in
\RR$, one has $\overline{X}(t) = \overline{X}(T-t)$ for $t
\in \RR$, and thus $\overline{X}(jT)=X_{goal}$ for all $j
\geq 0$ (see \cite{Cor07} for details). Therefore, $\overline{X}(t)$
is $T$-periodic.

The tracking error $\widetilde{X}(t)=\overline{X}^{\dag}(t) X(t)$
obeys
\begin{equation}\label{tesys}
  \dot{\widetilde{X}}(t) = \sum_{k=1}^{m} \widetilde{u}_k(t) \widetilde{S}_k(t)
\widetilde{X}(t), \quad \widetilde{X}(0) = X_{goal},
\end{equation}
where $\widetilde{u}_k = u_k - \overline{u}_k$, and
$\widetilde{S}_k(t) = \overline{X}^{\dag}(t) S_k \overline{X}(t) \in
\mathfrak{u}(n)$ depends on $t \in \RR$ and is $T$-periodic.
The goal is to stabilize $\widetilde{X}$ towards the identity $I$.
In order to accomplish this, one will choose a suitable Lyapunov
function $\mathcal{V}(\widetilde{X})$ to measure the distance from
$\widetilde{X}$ to the identity matrix $I$.

Let $\mathcal{W} \subset \mbox{U}(n)$ be the set of $\widetilde{X}
\in \mbox{U}(n)$ which have all the eigenvalues different from
$-1$. Note that
 \begin{equation}
  \label{eW}
\mathcal{W} = \{ W \in \mbox{U}(n) \; | \; \det ( I + W) \neq 0 \}.
 \end{equation}
By
the continuity of the determinant function, it follows that
$\mathcal{W}$ is open in $\mbox{U}(n)$. Denote by $\mathbf{H}$ the
(real) vector space of all Hermitian $n$-square complex matrices and
consider the map\footnote{Given $A, B \in \CC^{n \times n}$ with $B$
invertible and $AB^{-1}=B^{-1}A$ (i.e.\ $A$ and $B^{-1}$ commute),
one defines $A/B \triangleq AB^{-1}=B^{-1}A$. It is easy to see that
$A$ and $B^{-1}$ commute whenever $A$ and $B$ commute.}
\[
 \mathcal{W} \ni  \widetilde{X} \mapsto \Upsilon(\widetilde{X}) = \iota
\frac{\widetilde{X}-I}{\widetilde{X}+I} \in \mathbf{H}.
\]
One has that $\Upsilon$:~$\mathcal{W} \rightarrow \mathbf{H}$ is a
well-defined smooth map on the open submanifold $\mathcal{W}$ of
$\mbox{U}(n)$. Indeed, writing
\[
 \Upsilon(\widetilde{X})=\iota
\dfrac{\widetilde{X}-\widetilde{X}^{\dag}\widetilde{X}}{\widetilde{X}
+\widetilde{X}^{\dag}\widetilde{X}},
\]
one gets
\begin{align*}
 \Upsilon(\widetilde{X})^{\dag} & = -\iota
\dfrac{\widetilde{X}^{\dag}-\widetilde{X}\widetilde{X}^{\dag}}
{\widetilde{X}^{\dag}+\widetilde{X}\widetilde{X}^{\dag}} = -\iota
(I-\widetilde{X})\widetilde{X}^{\dag} (
(I+\widetilde{X})\widetilde{X}^{\dag} )^{-1} =
\Upsilon(\widetilde{X}).
\end{align*}

For $\widetilde{X} \in \mathcal{W}$, the distance to $I$ is measured
by the Frobenius norm\footnote{It follows from \eqref{lyaptan} in
Appendix~\ref{aCompact} that one may always write
$\mathcal{V}(\widetilde{X})=\sum_{j=1}^n \{ \tan(\theta_j/2) \}^2$, where $\exp
(\iota \theta_j), j=1, \ldots, n$, are the eigenvalues of $\widetilde X$.} of
$\Upsilon(\widetilde{X})$:
\begin{equation}\label{lyap}
 \mathcal{V}(\widetilde{X}) \triangleq
\mbox{Tr}\left(\Upsilon(\widetilde{X})
\Upsilon(\widetilde{X})^{\dag} \right) = - \mbox{Tr}\left(
\dfrac{(\widetilde{X}-I)^2}{(\widetilde{X}+I)^2} \right) \geq 0.
\end{equation}
Note that $\mathcal{V}$:~$\mathcal{W} \rightarrow \RR$ is
smooth and non-negative. Moreover, $\mathcal{V}(\widetilde{X})=0$
implies $\widetilde{X}=I$. One will impose that $\dot{\mathcal{V}}
\leq 0$. Using $\mbox{Tr}(AB)=\mbox{Tr}(BA)$, standard computations
yield, for $(\widetilde{X},t) \in \mathcal{W} \times\RR$,
\[
 \dot{\mathcal{V}}(\widetilde{X},t)  \triangleq
\dfrac{d}{dt}\mathcal{V}(\widetilde{X})  = -4 \sum_{k=1}^{m}
\widetilde{u}_k \mbox{Tr}
\left(\widetilde{X}(\widetilde{X}-I)(\widetilde{X}+I)^{-3}\widetilde{S}
_k(t)\right).
\]
Choose any (constant) feedback gains $f_k > 0$. Since $\widetilde{X}(\widetilde{X}-I)(\widetilde{X}+I)^{-3}$ is skew-Hermitian because $ \widetilde{X}$ is unitary, $\mbox{Tr}
\left(\widetilde{X}(\widetilde{X}-I)(\widetilde{X}+I)^{-3}\widetilde{S}
_k(t)\right)$ is real for each $k$  and  the feedbacks
\begin{equation}\label{feedback}
 \widetilde{u}_k(\widetilde{X},t) = f_k \mbox{Tr}
\left(\widetilde{X}(\widetilde{X}-I)(\widetilde{X}+I)^{-3}\widetilde{S}
_k(t)\right)
\end{equation}
produce
\begin{equation}\label{negdef}
 \dot{\mathcal{V}}(\widetilde{X},t) = -4 \sum_{k=1}^{m} f_k \mbox{Tr}^2
\left(\widetilde{X}(\widetilde{X}-I)(\widetilde{X}+I)^{-3}\widetilde{S}
_k(t)\right)  = -4 \sum_{k=1}^{m} \frac{ \widetilde{u}^2_k(\widetilde{X},t)}{f_k} \leq 0,
\end{equation}
$(\widetilde{X},t) \in \mathcal{W} \times\RR$, and hence
ensure that $\mathcal{V}(\widetilde{X}(t))$ is non-increasing.
Note that, for the
initial condition $\widetilde{X}(0)=\widetilde{X}_{goal} \in
\mathcal{W}$, the corresponding solution
$\widetilde{X}(t) \in \mathcal{W}$ is defined for every $t \geq
0$. Indeed, since $\mathcal V$ is decreasing, the trajectory
$\widetilde{X}(t)$ of
the closed-loop system (\ref{tesys},\ref{feedback})
remains in the positively invariant set $K$ given by
 \begin{equation}
  \label{eK}
K=\{\widetilde{X} \in \mathcal{W} \; | \; \mathcal{V}(\widetilde{X})
\leq \mathcal{V}(X_{goal})\},
 \end{equation}
at least in the maximal interval $[0, t_{max})$ where this solution
is defined. Since $K$ is compact (see Appendix \ref{aCompact}), it follows from
well-known results on ordinary differential equations that one
cannot have finite time scape, and thus the solution $\widetilde X(t)$ is
well-defined for all $t \geq 0$.

The controls
$\widetilde{u}_k(t)=\widetilde{u}_k(\widetilde{X}(t),t)$ are defined
for all $t \geq 0$, and they are also uniformly bounded on
$\RR_+$, since $\widetilde{X}(t)$ evolves in the compact set
$K \subset \mathcal{W}$, the map $\mathcal{W} \ni \widetilde{X}
\mapsto \widetilde{X}(\widetilde{X}-I)(\widetilde{X}+I)^{-3} \in
\CC^{n \times n}$ is smooth, and $\widetilde{S}_k(t)$ is
$T$-periodic (and thus is bounded). Note that $f_k > 0, k=1,
\ldots, m$, can be seen as  ``feedback gains'' in the expression of
$\widetilde{u}_k$ above.

The corresponding deterministic control laws are given as
\begin{equation}\label{deterministiclaws}
    u_k(X,t) = \overline{u}_k(t) + \widetilde{u}_k(X,t) = \sum_{\ell=1}^{M} a_{k,\ell} \sin(\ell
\omega t) + f_k \mbox{Tr}
\left(X (\overline{X}^{\dag}(t) X - I)(\overline{X}^{\dag}(t) X + I)^{-3}\overline{X}^{\dag}(t) S_k \right)
\end{equation}

\subsection{Stochastic Control Laws}\label{stochaticlawsalgorithm}

At the time instants $t=j T, j \in \NN$, one chooses the amplitudes $a_{k,\ell}^j$ of
the reference controls $\overline{u}_k(t)$ in \eqref{refcon} for $t
\in [jT,(j+1)T)$ in an independent stochastic manner following a
uniform distribution on the interval
$[-\frac{a_{max}}{2},\frac{a_{max}}{2}]$, where $a_{max} > 0$
is arbitrarily fixed. More precisely, assume the Lebesgue probability
measure on $\mathbf{A}=[-\frac{a_{max}}{2},\frac{a_{max}}{2}]^{mM}$
(with the Borel algebra). An element $\mathbf{a}_j=(a_{k,\ell}^j)=
(a_{1,1}^j,\dots,a_{1,M}^j, \dots, a_{m1}^j,\dots,a_{mM}^j) \in
\mathbf{A}$ gathers the random amplitudes $a_{k,\ell}^j \in
[-\frac{a_{max}}{2},\frac{a_{max}}{2}]$ of the reference controls
$\overline{u}_k(t)$ in \eqref{refcon} for $t \in [jT,(j+1)T)$. Fix
any i.i.d.\ (independent and identically distributed) random vectors
$\mathbf{a}_j=(a_{k,\ell}^j)$:~$ \NN \rightarrow \mathbf{A}
\subset \RR^{mM}$ having a uniform distribution on
$\mathbf{A}$. The resulting bounded continuous reference controls
for $k=1, \ldots , m$ are
\begin{equation}\label{refconrandom}
  \overline{u}_k(t) = \sum_{\ell=1}^{M} a_{k,l }^j \sin(\ell
\omega t), \quad t \in [jT,(j+1)T), \; j \in \NN.
\end{equation}

The corresponding stochastic control laws are given as
\begin{equation}\label{stochasticlaws}
    u_k(X,t) = \overline{u}_k(t) + \widetilde{u}_k(X,t) = \sum_{\ell=1}^{M} a_{k,l }^j \sin(\ell
\omega t) + f_k \mbox{Tr}
\left(X (\overline{X}^{\dag}(t) X - I)(\overline{X}^{\dag}(t) X + I)^{-3}\overline{X}^{\dag}(t) S_k \right).
\end{equation}

\subsection{Application to Quantum Control}\label{qma}

This section exhibits the simulation results obtained when the deterministic and stochastic control laws were applied in the generation
of the C--NOT (Controlled-NOT) quantum logic gate on $\mbox{U}(4)$   and involving two qubits. Thus the underlying Hilbert space corresponds to the tensor product $\CC^2\otimes\CC^2 \equiv \CC^4$, each $\CC^2$ being the Hilbert space of one qubit. We assume here that the dynamics is governed by the

 following driftless  system of the form \eqref{cqs} with $n=4$ and $m=6$,
\[
     \dot{X}(t) =  - \iota \sum_{k=1}^6 u_k(t) H_k X(t),
\]
where
\[
    H_1 = \sigma_x \otimes I_2, H_2 = I_2 \otimes \sigma_x, H_3 = \sigma_y
\otimes I_2, H_4 = I_2 \otimes \sigma_y, H_5 =  \sigma_x \otimes \sigma_x
+ \sigma_y \otimes \sigma_y + \sigma_z \otimes \sigma_z,
H_6 = I_2 \otimes I_2,
\]
$\sigma_x, \sigma_y, \sigma_z \in \CC^{2 \times 2}$ are the usual Pauli matrices, $\otimes$ is the tensor product, and $I_2 \in \CC^{2 \times 2}$ is the 2-identity matrix. It can be shown that this system is controllable (and only Lie brackets of up to length 3 are required).
The aim is to generate two distinct goal matrices:
\[
X_{1_{goal}} = \exp(\iota \pi / 2) \left[
\begin{array}{cccr}
  1 & 0 & 0 & 0 \\
  0 & 1 & 0 & 0 \\
  0 & 0 & 0 & 1\\
  0 & 0 & 1 & 0
\end{array}
 \right], \qquad
X_{2_{goal}} = \left[
\begin{array}{ccrc}
  1 & 0 & 0 & 0 \\
  0 & 0 & -1 & 0 \\
  0 & 1 & 0 & 0\\
  0 & 0 & 0 & 1
\end{array}
\right].
\]
The first goal propagator, up to the irrelevant global phase $e^{i\pi/2}$,   corresponds to a
C--NOT  gate which is fundamental in quantum computation \cite{NieChu00}. The second one was
chosen for academic purposes. Note that $X_{1_{goal}}, X_{2_{goal}} \in \mathcal{W}$. Some $\mbox{MATLAB}^\circledR$
simulations have been done with this system, implementing both
deterministic \eqref{deterministiclaws} and stochastic \eqref{stochasticlaws} control laws. One has chosen $M=4$, $a_{max} =
0.25$, $f_k= M  a_{max} = 0.75$ and $T=25 $ for all simulations.
For the simulation of Figures \ref{error} and \ref{loglyap}, the final time is
$t_f= 10 T$ and the goal matrix is $X_{1_{goal}}$.
Figure~\ref{error} shows: (above) the convergence of the Frobenius norm $\|
X(jT) - X_{goal}\|$ to zero as $j \to \infty$ for both methods; and (below) the
input norm $\| u(t) \| = \|(u_1(t), \dots, u_6(t)) \|$.
Figure~\ref{loglyap} presents the natural logarithm of the Lyapunov function for
$t=j T, j \in \NN$, for the deterministic and stochastic cases.  One sees that
the stochastic laws provide a remarkably better overall performance than the
deterministic one. Note that
the fact that both curves tends to straight lines indicates that the convergence is  exponential for both methods.

\begin{figure}[!htbp]
    \centerline{\includegraphics[width=\textwidth]{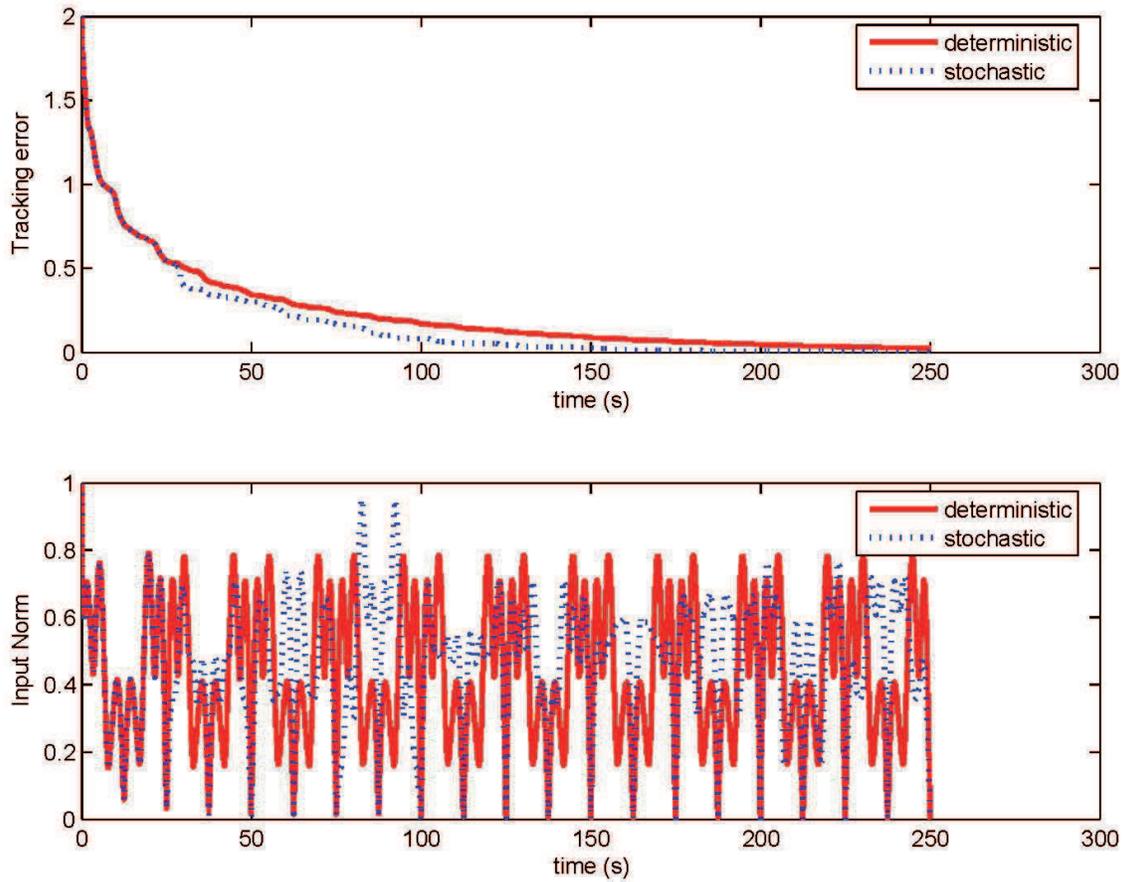}}
    \caption{Convergence of the norm of the tracking error to zero and input norm. For the stochastic control law,  the first choice of amplitudes $a^j_{k,l}$ for $j=0$ coincides with the amplitudes $a_{k,l}$  taken in deterministic control law; this explains the perfect overlaps between the solid and dotted curves  for $t$ between $0$ and $T=25$.}
     \label{error}
\end{figure}

\pagebreak

\begin{figure}[!htbp]
   \centerline{\includegraphics[width=\textwidth]{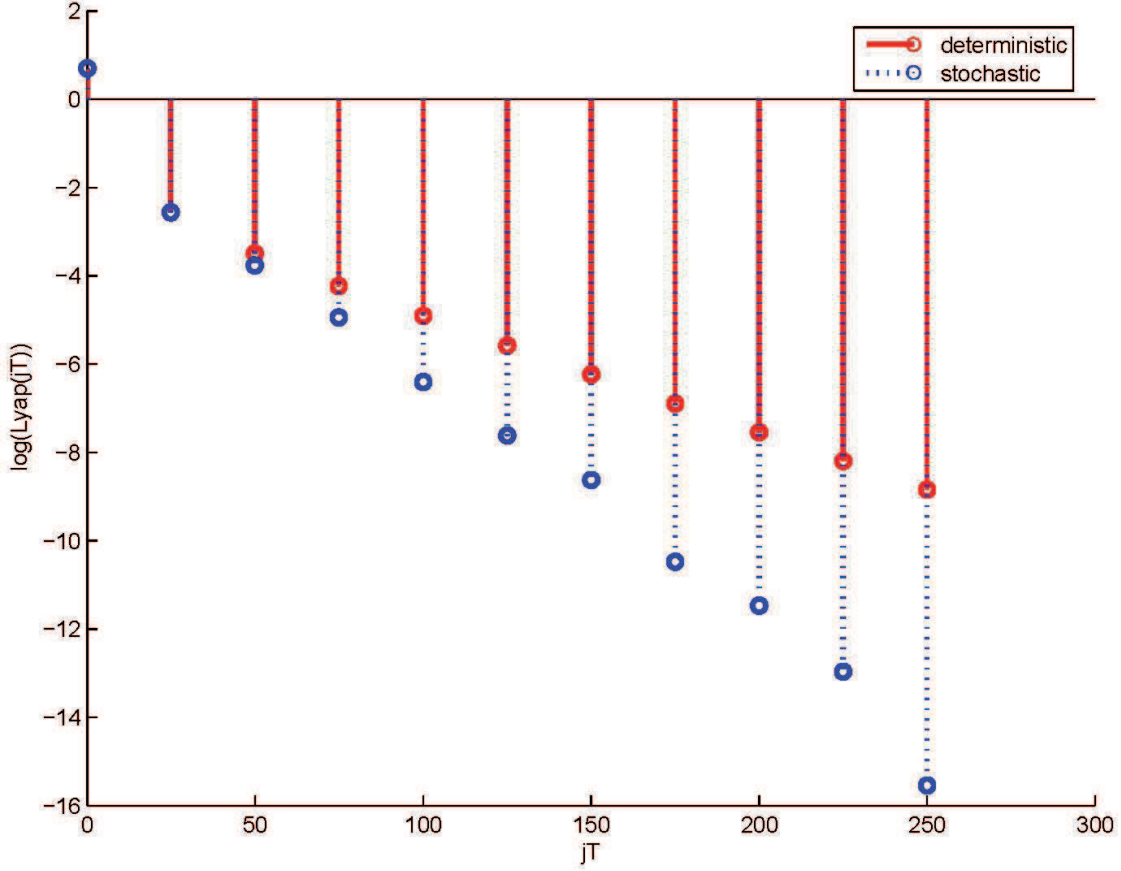}}
    \caption{Logarithm of the Lyapunov function at each sampling time  $t=jT$ with $T=25$.}\label{loglyap}
\end{figure}

Recall the notation
$\mathbf{a} = \{ a_{k,\ell} \; \in \RR : \; k=1, \ldots, m, \; \ell=1, \ldots ,
M\} \in \RR^{mM}$.
To make a fair comparison, one has made a total of 50 simulations (indexed by $p$)
for $t_f = 25 T= 625 $,
for both goal matrices $X_{1_{goal}}$ and $X_{2_{goal}}$, but now
with (fixed) different choices $\mathbf{a}^p$ of the set of amplitudes $\mathbf{a}$
for the deterministic strategy. In order to specify the
$\mathbf{a}^p$ in the $p$-th deterministic simulation, one has
chosen a random (uniformly distributed) $\mathbf{a}^p \in
[-a_{max},a_{max}]^{m M}$. It is important to point out that the $p$-th
deterministic simulation for both goal propagators uses the same
fixed choice $\mathbf{a}^p$. The control and simulation parameters are the same as before.
Figure~\ref{montecarlo} compares the
obtained results. At the top, one sees the
results for $X_{goal}=X_{1_{goal}}$ and, at the middle, the results for $X_{goal}=X_{2_{goal}}$. Notice
that the results of the deterministic strategy
depend strongly on the chosen goal matrix for a fixed $\mathbf a$. At the
bottom of Figure~\ref{montecarlo}, one sees a zoom view for
$X_{goal}=X_{1_{goal}}$. Observe that the worst result of
the stochastic strategy is of the order of the best result of the
deterministic strategy. Simulations not shown here have indicated
that, as $t_f/T$ gets greater, better is the performance of the
stochastic strategy when compared to the deterministic one.

\begin{figure}[!htbp]
    \centerline{\includegraphics[width=\textwidth]{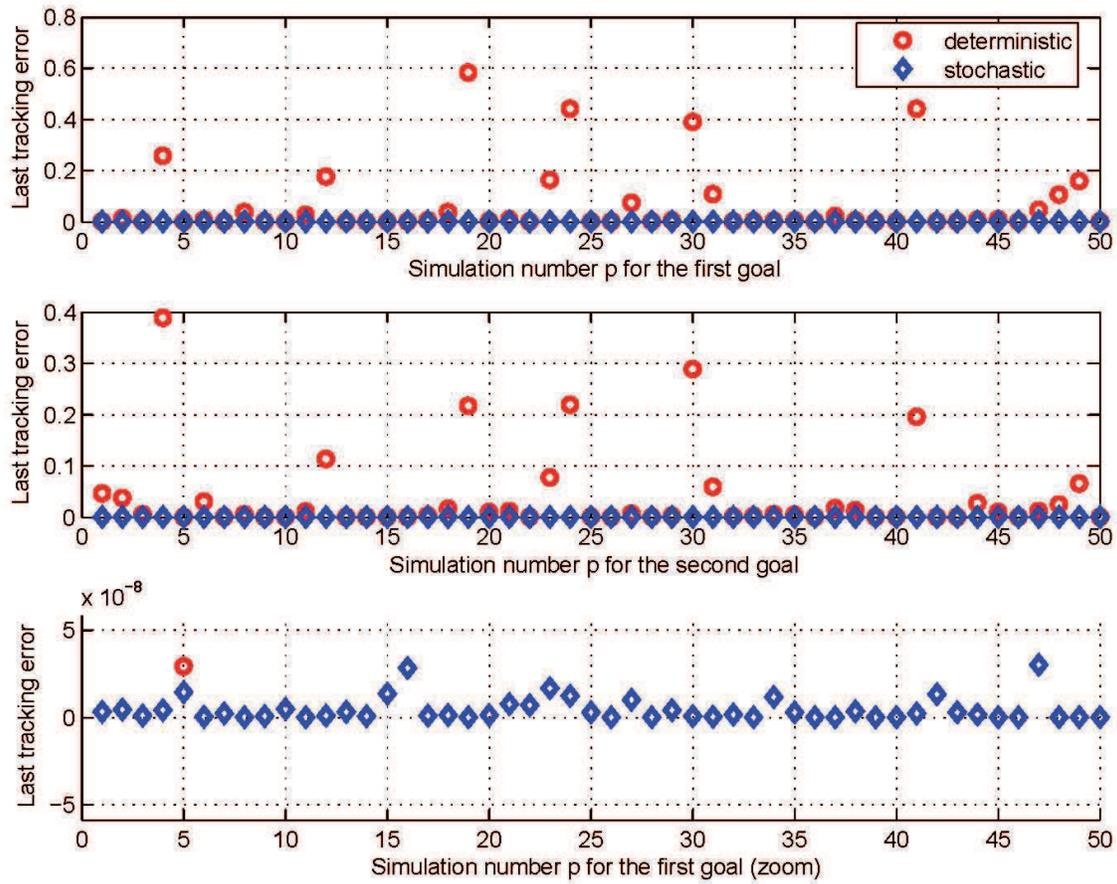}}
    \caption{Error between $X$ and $X_{\mu,goal}$  at $t_f=25 T$ for 50 Monte Carlo simulations:  top  corresponds to  $\mu=1$, middle to $\mu=2$; bottom  is a zoom for $\mu=1$.  }\label{montecarlo}
\end{figure}

\clearpage

\subsection{Main Results}\label{deterministicstochasticresults}

The next theorem shows that, for $M > 0$ big enough, a random (but
fixed) choice of the coefficients $a_{k,\ell}, k=1, \ldots, m, \ell=1,
\ldots , M$ in \refeq{refcon} gives a local solution to the deterministic $T$-sampling stabilization problem with probability one.
\begin{theorem}\label{t1}(Local Deterministic Method)
Assume that \eqref{cqs} is controllable and that its initial
condition is $X(0)=X_0=I$. Let $T > 0$, $X_{goal} \in \mathcal W \subset
\mbox{U}(n)$, where $\mathcal W$ is as in \refeq{eW}. Choose
any nonzero $f_1, \dots, f_m \in \RR$. Fix a integer $M >0$ and a choice of
$\bar {\mathbf a}= (\bar{a}_{k,\ell} \in \RR \; : \; k=1, \ldots,
m, \ell=1, \ldots, M) \in \RR^{m M}$ in \refeq{refcon}. One says that
this choice of $\bar{\mathbf a}$ is \emph{admissible} if it solves the
$T$-sampling stabilization problem, that is \refeq{ctez-i} holds for the
closed-loop system
(\ref{tesys},\ref{feedback}). Let ${\mathcal A}^c \subset
\RR^{m  M}$ be the set of all non-admissible $\bar {\mathbf a}$.
Then, there exists $M > 0$ big enough such that ${\mathcal A}^c$ is
closed in $\RR^{m M}$ with zero Lebesgue measure.
\end{theorem}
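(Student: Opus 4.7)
The plan is to reduce admissibility to injectivity of a real-linear trace operator $\Phi_{\bar{\mathbf a}}$ on $\mathfrak u(n)$, and then to show that, for $M$ large enough, non-injectivity cuts out a proper real-algebraic subvariety of $\mathbb R^{mM}$.

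I would first apply the $T$-periodic version of LaSalle's invariance theorem to the closed loop \eqref{tesys}--\eqref{feedback} on the compact positively invariant set $K$ from \refeq{eK}. By \refeq{negdef}, $\widetilde u_k\equiv 0$ for each $k$ on the largest invariant set $\Omega_{\bar{\mathbf a}}\subset K$, and then \refeq{tesys} gives $\dot{\widetilde X}\equiv 0$ there. Setting $Q(\widetilde X_\infty):=\widetilde X_\infty(\widetilde X_\infty-I)(\widetilde X_\infty+I)^{-3}\in\mathfrak u(n)$, every element of $\Omega_{\bar{\mathbf a}}$ is a constant $\widetilde X_\infty\in K$ satisfying
\[
\mathrm{Tr}\bigl(Q(\widetilde X_\infty)\,\overline X^{\dag}_{\bar{\mathbf a}}(t)\,S_k\,\overline X_{\bar{\mathbf a}}(t)\bigr)=0,\qquad t\in[0,T],\ k=1,\ldots,m.
\]
A short computation shows that $Q$ is a local diffeomorphism near $\widetilde X=I$ with $Q(I)=0$, so having $\Omega_{\bar{\mathbf a}}\supsetneq\{I\}$ is equivalent to non-triviality of the kernel of the real-linear map
\[
\Phi_{\bar{\mathbf a}}:\mathfrak u(n)\longrightarrow C([0,T],\mathbb R^m),\qquad Q\longmapsto\bigl(\mathrm{Tr}(Q\,\overline X^{\dag}_{\bar{\mathbf a}}(t)S_k\overline X_{\bar{\mathbf a}}(t))\bigr)_{k=1}^m.
\]
Injectivity of $\Phi_{\bar{\mathbf a}}$ therefore forces $\Omega_{\bar{\mathbf a}}=\{I\}$ and hence $\widetilde X(t)\to I$, i.e.\ admissibility.

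Next I would note that $\overline X_{\bar{\mathbf a}}(t)$ solves an ODE whose right-hand side is polynomial in $\bar{\mathbf a}$, so each Taylor coefficient in $t$ at $t=0$ of the matrix-valued map $t\mapsto\overline X^{\dag}_{\bar{\mathbf a}}(t)S_k\overline X_{\bar{\mathbf a}}(t)$ is polynomial in $\bar{\mathbf a}$. Non-injectivity of $\Phi_{\bar{\mathbf a}}$ amounts to the simultaneous vanishing of all $n^2\times n^2$ minors of the matrix stacking sufficiently many of these Taylor-based linear functionals $L_0,L_1,\ldots:\mathfrak u(n)\to\mathbb R^m$; by the Noetherian property, finitely many minors suffice, defining a real-algebraic subvariety $\mathcal Z\subseteq\mathbb R^{mM}$ containing $\mathcal A^c$. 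Any real-algebraic subvariety is closed, and is either the whole ambient space or of Lebesgue measure zero; closedness of $\mathcal A^c$ itself then follows from the inclusion $\mathcal A^c\subseteq\mathcal Z$ together with continuous dependence of solutions of \refeq{tesys} on $\bar{\mathbf a}$ and compactness of $K$, via a normalization on the unit sphere of $\mathfrak u(n)$.

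The main obstacle is exhibiting a single $\bar{\mathbf a}^\star$ with $\Phi_{\bar{\mathbf a}^\star}$ injective, which forces $\mathcal Z\ne\mathbb R^{mM}$. To this end I would expand the $T$-periodic family $\overline X^{\dag}_{\bar{\mathbf a}}(t)S_k\overline X_{\bar{\mathbf a}}(t)$ in powers of $\bar{\mathbf a}$ around $\bar{\mathbf a}=0$ (where $\overline X_{\bar{\mathbf a}}\equiv X_{goal}$) via a Magnus or Chen--Fliess series: the coefficient of a monomial $\bar a_{k_1,\ell_1}\cdots\bar a_{k_r,\ell_r}$ is an iterated Lie bracket $\bigl[S_{k_1},[S_{k_2},\ldots,[S_{k_r},X_{goal}^{\dag}S_kX_{goal}]\cdots]\bigr]$ multiplied by an iterated integral of $\sin(\ell_1\omega\tau)\cdots\sin(\ell_r\omega\tau)$. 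Controllability of \eqref{cqs}, $\mathrm{Lie}\{S_1,\ldots,S_m\}=\mathfrak u(n)$, guarantees that iterated brackets of depth at most some $r_0$ already span $\mathfrak u(n)$; taking $M\ge\ell_0$ with $\ell_0$ sufficiently large to make the relevant trigonometric integrals linearly independent then makes each bracketing pattern contribute an independently detectable partial derivative of $\Phi_{\bar{\mathbf a}}$ at the origin. Hence no non-zero $Q\in\mathfrak u(n)$ can annihilate $\Phi_{\bar{\mathbf a}^\star}$ for a generic $\bar{\mathbf a}^\star$ near $0$, which is the required witness. The delicate quantitative point --- and the real content of the hypothesis \emph{``$M$ big enough''} --- is the explicit bookkeeping relating the controllability depth $r_0$ to the minimal admissible $M$.
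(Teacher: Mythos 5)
Your overall architecture (LaSalle on the compact invariant set $K$, reduction of admissibility to the non-degeneracy of the family $t\mapsto\overline X^{\dag}_{\bar{\mathbf a}}(t)S_k\overline X_{\bar{\mathbf a}}(t)$ tested against a fixed skew-Hermitian matrix, and then a polynomial/genericity argument in $\bar{\mathbf a}$) matches the paper's decomposition into Theorems~\ref{tMain} and~\ref{tProb1}. The genuine gap is in your witness step. You need one $\bar{\mathbf a}^\star$ for which $\Phi_{\bar{\mathbf a}^\star}$ is injective, equivalently for which the Taylor coefficients $C_k^j(0)$ of \refeq{defCkj} span $\mathfrak u(n)$; without such a witness the "determinant polynomial'' could be identically zero and the whole measure-zero conclusion collapses. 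You propose to produce it by a Magnus/Chen--Fliess expansion around $\bar{\mathbf a}=0$ and assert that, since brackets of depth $\le r_0$ span $\mathfrak u(n)$, choosing enough harmonics makes "each bracketing pattern contribute an independently detectable partial derivative.'' That assertion is precisely the hard content of the theorem and is not proved: the coefficients multiplying the iterated brackets are not free --- they are specific symmetric polynomials in the odd derivatives $u_k^{(2i-1)}(0)$ (cf.\ \refeq{CI}), tied together by shuffle-type relations, and it is not automatic that antisymmetric bracket directions survive rather than cancel. The paper does not attempt this combinatorics; it imports the existence of a witness from Coron's Return Method (\cite[Remark 3.1]{Cor94}, quoted as Theorem~\ref{crm}: controllability implies the existence of \emph{some} smooth $T$-periodic odd controls satisfying \refeq{rss}), and then transfers that witness into the finite-dimensional family \refeq{refcon} by matching the jet $(u_k^{(1)}(0),u_k^{(3)}(0),\dots,u_k^{(2M-1)}(0))$ through an invertible Vandermonde map --- this jet-matching is also what fixes the explicit value $M=J/2$, whereas your relation between $r_0$ and $M$ is left unspecified. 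Either cite Coron's theorem at this point or supply the free-Lie-algebra/iterated-integral non-cancellation argument in full; as written the proof is incomplete exactly where the controllability hypothesis has to be used.

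Two smaller points. First, your claimed \emph{equivalence} between $\Omega_{\bar{\mathbf a}}\supsetneq\{I\}$ and $\ker\Phi_{\bar{\mathbf a}}\ne\{0\}$ is more than you need and is not justified by a local diffeomorphism statement at $I$: the invariant set may contain points far from $I$. What you actually use is the implication that injectivity of $\Phi_{\bar{\mathbf a}}$ forces $\Omega_{\bar{\mathbf a}}=\{I\}$, and for that the relevant fact is the \emph{global} one that $\widetilde X(\widetilde X-I)(\widetilde X+I)^{-3}=0$ on $\mathcal W$ only for $\widetilde X=I$ (both $\widetilde X$ and $(\widetilde X+I)^{-3}$ are invertible); state it that way. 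Second, the inclusion $\mathcal A^c\subseteq\mathcal Z$ with $\mathcal Z$ closed of measure zero gives the measure-zero claim but not, by itself, closedness of $\mathcal A^c$; your appeal to "continuous dependence and a normalization on the unit sphere'' is too vague to close this, so either prove $\mathcal A^c=\mathcal Z$ (i.e., that failure of the spanning condition actually obstructs convergence) or weaken the conclusion to "$\mathcal A^c$ is contained in a closed set of measure zero,'' which is all the subsequent results require.
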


The stochastic version of the result above is:

\begin{theorem}\label{mainstochasticresult}(Local Stochastic Method)
Assume that \eqref{cqs} is controllable and that its initial
condition is $X(0)=X_0=I$. Let $T > 0$, $X_{goal} \in \mathcal W \subset
\mbox{U}(n)$, where $\mathcal W$ is as in \refeq{eW}. Choose
any nonzero $f_1, \dots, f_m \in \RR$. Fix
any $a_{max} > 0$. Then, there exist $\overline{M}
> 0$ big enough such that, for every $M \geq
\overline{M}$, the stochastic control laws \eqref{stochasticlaws} assure that $
  \lim_{j \to \infty} X(jT) = X_{goal}$
almost surely.
\end{theorem}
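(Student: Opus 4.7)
The plan is to reinterpret the sampled trajectory $\widetilde{X}_j := \widetilde{X}(jT)$ as a homogeneous Markov chain on the compact positively invariant set $K$ defined in \eqref{eK}, and to invoke the stochastic version of LaSalle's invariance theorem from \cite{Kushner71}. The Markov property is immediate: once $\mathbf{a}_j$ is drawn, the closed-loop evolution \eqref{tesys}--\eqref{feedback} on $[jT,(j+1)T)$ depends only on $\widetilde{X}(jT)$ and $\mathbf{a}_j$, and the $\mathbf{a}_j$'s are i.i.d.; forward invariance of $K$ over one period is guaranteed by the fact that \eqref{negdef} makes $\mathcal{V}$ non-increasing pathwise. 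Using $\mathcal{V}$ itself as the stochastic Lyapunov function, integration of \eqref{negdef} over $[jT,(j+1)T)$ followed by conditioning on $\mathcal{F}_j := \sigma(\mathbf{a}_0,\ldots,\mathbf{a}_{j-1})$ yields
\[
\mathbb{E}\!\left[\mathcal{V}(\widetilde{X}_{j+1}) \mid \mathcal{F}_j \right] - \mathcal{V}(\widetilde{X}_j) \;=\; -\,g(\widetilde{X}_j),
\]
with
\[
g(\widetilde{X}) \;=\; \frac{4}{a_{max}^{mM}} \int_{\mathbf{A}} \!\int_{0}^{T} \sum_{k=1}^{m} \frac{\widetilde{u}_k\!\left(\widetilde{X}^{\mathbf{a}}(t),t\right)^{2}}{f_k}\, dt\, d\mathbf{a} \;\geq\; 0,
\]
where $\widetilde{X}^{\mathbf{a}}(t)$ denotes the closed-loop tracking error on $[0,T]$ starting from $\widetilde{X}$ with reference amplitudes $\mathbf{a}$. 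Continuous dependence of the ODE flow on initial condition and parameters makes $g$ continuous on $K$. Kushner's theorem then delivers almost-sure convergence of $\widetilde{X}_j$ to the largest forward-invariant set contained in $\mathcal{E} := \{\widetilde{X}\in K : g(\widetilde{X}) = 0\}$.

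The crux is therefore to prove $\mathcal{E} = \{I\}$ for $M\geq\overline{M}$ large enough. If $g(\widetilde{X}_0)=0$, then $\widetilde{u}_k(\widetilde{X}^{\mathbf{a}}(t),t)=0$ for Lebesgue-almost every $(t,\mathbf{a})\in[0,T]\times\mathbf{A}$; simultaneous vanishing of all $\widetilde{u}_k$ along \eqref{tesys} forces $\widetilde{X}^{\mathbf{a}}(t)\equiv\widetilde{X}_0$, so substituting back into \eqref{feedback} gives
\[
\mathrm{Tr}\!\left(\overline{X}(t,\mathbf{a})\, B\, \overline{X}^{\dag}(t,\mathbf{a})\, S_k\right) = 0 \quad \text{for a.e.\ }\mathbf{a}\in\mathbf{A},\ \text{every}\ t\in[0,T],\ k=1,\ldots,m,
\]
where $B := \widetilde{X}_0(\widetilde{X}_0-I)(\widetilde{X}_0+I)^{-3}$ is skew-Hermitian. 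Since $\overline{X}(t,\mathbf{a})$ is real-analytic in $\mathbf{a}$, each trace is a real-analytic function of $\mathbf{a}$ vanishing on a set of positive measure, hence identically zero on $\mathbf{A}$. Expanding $\overline{X}(t,\mathbf{a})$ in a Chen/Volterra series about $\mathbf{a}=0$ and differentiating the trace identity repeatedly in the components of $\mathbf{a}$ brings in iterated brackets of the form $[S_{k_1},[S_{k_2},\ldots]]$; the controllability hypothesis $\mathrm{Lie}\{S_1,\ldots,S_m\}=\mathfrak{u}(n)$ then forces $\mathrm{Tr}(BY)=0$ for every $Y\in\mathfrak{u}(n)$, so $B=0$ and $\widetilde{X}_0=I$. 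As $\{I\}$ is trivially forward invariant under \eqref{tesys}--\eqref{feedback} (the feedbacks vanish there), the largest invariant subset of $\mathcal{E}$ is exactly $\{I\}$, and we conclude $\widetilde{X}_j \to I$, i.e.\ $X(jT)\to X_{goal}$, almost surely.

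The main obstacle is the last Lie-algebraic step, which is exactly the analytic rigidity already needed for the deterministic Theorem~\ref{t1}: one must choose $M$ large enough that the $M$ harmonics in $\overline{u}_k$ generate, through successive derivatives in $\mathbf{a}$, enough iterated brackets of the $S_k$'s to exhaust $\mathfrak{u}(n)$. Once this is established, the same threshold $\overline{M}$ from Theorem~\ref{t1} works here, and the stochastic convergence becomes a soft consequence of Kushner's theorem applied to the Lyapunov function $\mathcal{V}$.
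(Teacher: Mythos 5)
Your proposal is correct and its skeleton coincides with the paper's: the sampled error $\widetilde{X}_j$ is a Markov chain on the compact positively invariant set $K$, your function $g$ is exactly the paper's $Q$ in \refeq{eDefQ}, the one-step drift identity is \refeq{eQQ}, and Kushner's theorem (Theorem~\ref{stochastic}) reduces everything to proving $Q^{-1}(0)=\{I\}$ together with continuity of $Q$ and compactness of $K$ (the paper does not actually need the ``largest invariant subset'' refinement you invoke, since $Q(\widetilde X_j)\to 0$ plus $Q^{-1}(0)=\{I\}$ already forces $\widetilde X_j\to I$). The only place you genuinely diverge is in killing the set $\{Q=0\}$. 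The paper fixes a \emph{single admissible} parameter vector $\mathbf{a}^*\in\mathbf{A}$ --- one exists for $M\geq\overline{M}$ because, by Theorem~\ref{tProb1}, the non-admissible set has zero Lebesgue measure and so cannot cover $\mathbf{A}$ --- and then reuses verbatim the deterministic LaSalle computation of Theorem~\ref{tMain}: differentiating $\mbox{Tr}\left(B\,\overline{X}^{\dag}(t)S_k\overline{X}(t)\right)=0$ repeatedly in $t$ generates the matrices $C_k^j(0)$ of \refeq{defCkj}, which span $\mathfrak{u}(n)$ by \refeq{rss}, whence $B=0$. You instead differentiate in $\mathbf{a}$ at $\mathbf{a}=0$ after an analyticity argument; this is workable in principle, but the assertion that the resulting iterated brackets exhaust $\mathfrak{u}(n)$ at ``the same threshold $\overline{M}$ from Theorem~\ref{t1}'' cannot simply be imported --- Theorem~\ref{tProb1} establishes spanning for the $t$-derivatives $C_k^j(0)$, not for the $\mathbf{a}$-Taylor coefficients, so your route would need its own Vandermonde-type computation in the spirit of Appendix~\ref{aProb1}. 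The quickest repair is the paper's move: drop the $\mathbf{a}$-expansion, note that your trace identity holds for every $t\in[0,T]$ at one admissible $\mathbf{a}$, and run the $t$-derivative induction. Everything else in your argument is sound, and your endgame (skew-Hermitian $B$ orthogonal to all of $\mathfrak{u}(n)$ under the trace pairing forces $B=0$, hence $\widehat{X}=I$ by invertibility of $\widehat{X}$ and $\widehat{X}+I$) is in fact a slightly cleaner finish than the paper's diagonalization via the function $\alpha$.
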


The exponential convergence for both methods is established by:

\begin{theorem}\label{expconv}
The convergence of $X(jT)$ towards $X_{goal}$ in Theorem~\ref{t1}
 is exponential. The almost sure convergence of $X(jT)$ towards $X_{goal}$ in Theorem~\ref{mainstochasticresult}
 is also exponential.
\end{theorem}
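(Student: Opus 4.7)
\emph{Plan.} The approach is to reduce exponential convergence to a local analysis near $\widetilde X = I$. Theorems~\ref{t1} and~\ref{mainstochasticresult} already guarantee $\widetilde X(jT) \to I$, so after a finite $j_0$ (random in the stochastic case) the iterates enter any prescribed neighborhood of $I$; local exponential contraction of the Poincar\'e map $\widetilde X(jT) \mapsto \widetilde X((j+1)T)$ therefore propagates to a global exponential rate. Moreover, on the compact invariant set $K$ of \refeq{eK} the identity $\mathcal V(\widetilde X) = \sum_j \tan^2(\theta_j/2)$ yields $c_1\|\widetilde X - I\|^2 \le \mathcal V(\widetilde X) \le c_2\|\widetilde X - I\|^2$ in a neighborhood of $I$, so exponential decay of $\mathcal V(\widetilde X(jT))$ is equivalent to exponential decay of the Frobenius tracking error.

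Next, I would linearize around $\widetilde X = I$. Writing $\widetilde X = I + E$ with $E + E^{\dag} = O(\|E\|^2)$ and expanding $(\widetilde X + I)^{-3}$ around $2I$, the feedback \refeq{feedback} becomes $\widetilde u_k = (f_k/8)\,\tr(E\,\widetilde S_k(t)) + O(\|E\|^2)$; the linearized error then obeys the $T$-periodic system
\begin{equation*}
  \dot E \;=\; \sum_{k=1}^m \tfrac{f_k}{8}\,\tr\bigl(E\,\widetilde S_k(t)\bigr)\,\widetilde S_k(t),
\end{equation*}
with monodromy operator $\mathcal M(\mathbf a)$ on the tangent space $\mathfrak u(n)$ at $I$. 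Using the linearized Lyapunov function $W(E) = \|E\|^2/4$ one obtains
\begin{equation*}
  W(E(T)) \;=\; W(E(0)) - \tfrac{1}{16}\int_0^T \sum_{k=1}^m f_k\,\tr^2\bigl(E(t)\,\widetilde S_k(t)\bigr)\,dt.
\end{equation*}
The pivotal step is to show that the admissibility hypothesis of Theorem~\ref{t1} (respectively $M \ge \overline M$ in Theorem~\ref{mainstochasticresult}) forces the associated observability Gramian on $\mathfrak u(n)$ to be strictly positive definite; combined with $E(t) - E(0) = O(\|E(0)\|^2)$ over one period, this delivers $W(E(T)) \le (1-\delta)\,W(E(0))$ with $\delta(\mathbf a) > 0$, hence $\|\mathcal M(\mathbf a)\| \le \sqrt{1-\delta} < 1$.

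With this spectral bound in hand, Lyapunov's indirect method applied to the discrete Poincar\'e map yields local exponential convergence of the full nonlinear dynamics, which establishes the deterministic half of Theorem~\ref{expconv}. For the stochastic half, the $\mathcal M(\mathbf a_j)$ are i.i.d.\ with $\|\mathcal M(\mathbf a_j)\| \le 1$ almost surely and $<1$ on a set of positive Lebesgue measure (by the deterministic analysis applied pointwise in $\mathbf a$). Submultiplicativity $\|\mathcal M(\mathbf a_{j-1})\cdots\mathcal M(\mathbf a_0)\| \le \prod_i \|\mathcal M(\mathbf a_i)\|$ together with the strong law of large numbers then yields $\limsup_{j\to\infty} j^{-1}\log\|\widetilde X(jT) - I\| \le \mathbb E[\log\|\mathcal M(\mathbf a)\|] < 0$ almost surely.

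The main obstacle is the quantitative link between the admissibility / $M$-largeness conditions (which in Theorems~\ref{t1} and~\ref{mainstochasticresult} only ensure asymptotic attraction to $I$) and the strict positive-definiteness of the observability Gramian needed here. This requires extracting from the earlier proofs that the Lie-algebraic non-degeneracy they exploit is a first-order phenomenon --- namely that the linearized time-varying system $\dot E = \sum_k (f_k/8)\,\tr(E\,\widetilde S_k(t))\,\widetilde S_k(t)$ is uniformly observable on $[0,T]$ --- rather than a property of the full nonlinear dynamics only. Once this quantitative observability is in hand, both halves of the theorem follow by the steps above.
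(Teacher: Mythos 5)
Your route is genuinely different from the paper's. You linearize the closed loop at $\widetilde X=I$, bound the monodromy operator $\mathcal M(\mathbf a)$ of the resulting $T$-periodic linear system, and invoke Lyapunov's indirect method for the Poincar\'e map; the paper never linearizes. Instead, its Lemma~\ref{lExp} proves a \emph{nonlinear} per-period decrease $\mathcal V(\widetilde X((j+1)T))-\mathcal V(\widetilde X(jT))\leq -M_{\mathbf a}\|\widetilde X(jT)-I\|^2$ uniformly on the compact set $K$ of \refeq{eK}, by a compactness/contradiction argument: if no such constant existed, one extracts a sequence with vanishing control effort, shows via Cauchy--Schwarz that $Z_n(t)/\|Z_{0n}\|$ stays uniformly close to its initial value, passes to a limit $Z^*\in\mathbf G$, and contradicts the strict positivity of $P_{\mathbf a}(Z^*)=\int_0^T\sum_k f_k\,\mbox{Tr}^2(Z^*\widetilde S_k(t))\,dt$. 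The ``main obstacle'' you flag --- positive definiteness of the observability Gramian --- is precisely this positivity of $P_{\mathbf a}$ on the unit sphere $\mathbf G$ of $\mathfrak u(n)$, and it does follow from the same Lie-algebraic argument as Theorem~\ref{tMain}: if $\mbox{Tr}(E(t)\widetilde S_k(t))\equiv 0$ then $\dot E\equiv 0$, so $E$ is constant, and the differentiation/\refeq{rss} argument forces $E=0$. Together with compactness of $\mathbf G$ this gives your uniform $\delta(\mathbf a)>0$, so the deterministic half of your plan closes correctly and is, if anything, a cleaner packaging of the same underlying positivity than the paper's contradiction argument.

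The stochastic half as you sketch it has a genuine gap. The SLLN/submultiplicativity argument controls the product $\prod_i\|\mathcal M(\mathbf a_i)\|$ of the \emph{linearized} maps, but the actual iterates obey $E_{j+1}=\mathcal M(\mathbf a_j)E_j+R_j$ with $\|R_j\|\leq C\|E_j\|^2$, and since $\delta(\mathbf a)$ is not bounded away from zero on $\mathbf A$ (it vanishes on the null set of non-admissible $\mathbf a$ and can be arbitrarily small nearby), individual factors can be arbitrarily close to isometries; you cannot simply absorb the accumulated quadratic remainders into the linear rate without a further argument (e.g.\ a careful estimate on $\sum_j\|E_j\|$ along the random product, or a stopping-time/restart argument). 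The paper avoids this entirely: it proves the nonlinear inequality $Q(\widetilde X_0)\geq M\|Z_0\|^2$ for the averaged one-step decrease \refeq{eDefQ} (again by compactness/contradiction, now with an $L^1$ limit over $\mathbf A\times[0,T]$ and Lemma~\ref{D1}), which combined with \refeq{ec1c2} and \refeq{z2ineq} gives $\mathbb E(\mathcal V(\widetilde X_{j+1})/\widetilde X_j)\leq(1-\rho)\mathcal V(\widetilde X_j)$, and then cites Kushner's supermartingale theorem for almost-sure exponential convergence. If you want to keep your Floquet/SLLN picture for the stochastic case, you must either supply the missing perturbation estimate or switch, as the paper does, to a decrease estimate for the nonlinear one-step map averaged over $\mathbf a$.
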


It is important to point out that Remark~\ref{worstdirec} in Section~\ref{proofexpconv} gives a clue of why the
convergence of the stochastic method allows a greater Lyapunov exponent.

The results below solve the $T$-sampling stabilization problem for
an arbitrary initial condition:
\begin{corollary}\label{c1} If system \refeq{cqs} is controllable,
then the $T$-sampling stabilization problem is
solvable for any initial condition $X(0)=Y_0 \in \mbox{U}(n)$ and any
goal state $Y_{goal} \in \mbox{U}(n)$ such that $Y_{goal} Y_0^\dag
\in \mathcal W$.
\end{corollary}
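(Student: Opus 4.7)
The plan is to reduce the general case to the initial condition $X_0 = I$ already handled by Theorem~\ref{t1} by exploiting the right-invariance of the dynamics \eqref{cqs}. Concretely, given $Y_0, Y_{goal} \in \mbox{U}(n)$ with $Y_{goal} Y_0^\dag \in \mathcal{W}$, I would introduce the right-translated state $Z(t) := X(t) Y_0^\dag$ and set as the new goal the unitary $X_{goal} := Y_{goal} Y_0^\dag$, which belongs to $\mathcal{W}$ by hypothesis.

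The first step is to check that $Z$ obeys the same driftless system, namely
\[
\dot Z(t) = \sum_{k=1}^m u_k(t) S_k Z(t), \qquad Z(0) = Y_0 Y_0^\dag = I.
\]
This is immediate from right-multiplication of the original equation by the constant unitary $Y_0^\dag$. Furthermore, since right multiplication by $Y_0$ is a diffeomorphism of $\mbox{U}(n)$, the two convergence statements are equivalent: $X(jT) \to Y_{goal}$ if and only if $Z(jT) = X(jT) Y_0^\dag \to X_{goal}$ as $j \to \infty$.

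The second step is to apply Theorem~\ref{t1} to the $Z$-system with initial condition $I$ and goal $X_{goal} \in \mathcal{W}$. For $M$ large enough, almost every choice of reference amplitudes $\bar{\mathbf a}$ is admissible, producing via \eqref{deterministiclaws} a feedback $u_k = u_k(Z,t)$ that drives $Z(jT) \to X_{goal}$. Pulling this back to the original state yields the feedback $u_k(X,t) := u_k(X Y_0^\dag, t)$, a smooth $T$-periodic law on $\mbox{U}(n) \times \RR$; its well-posedness follows from that of the $Z$-problem, since $\overline Z^\dag(t) X(t) Y_0^\dag$ remains in the compact invariant set $K$ of \eqref{eK}.

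There is no substantive obstacle here: the argument is just a change of coordinates by right translation with the constant element $Y_0^\dag \in \mbox{U}(n)$, which manifestly preserves the right-invariant structure of \eqref{cqs}. The very same reduction transfers the almost-sure conclusion of Theorem~\ref{mainstochasticresult} to arbitrary initial conditions under the hypothesis $Y_{goal} Y_0^\dag \in \mathcal{W}$, and, combined with Theorem~\ref{expconv}, ensures that the convergence $X(jT) \to Y_{goal}$ is exponential in both the deterministic and the stochastic variants of the corollary.
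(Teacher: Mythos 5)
Your proposal is correct and follows essentially the same route as the paper: both reduce to the case $X_0=I$ with goal $X_{goal}=Y_{goal}Y_0^\dag\in\mathcal{W}$ via the right-invariance of \eqref{cqs} (the paper phrases this as applying the inputs obtained for the translated problem to $Y(t)=X(t)Z$ with $Z=Y_0$, which is exactly your substitution $Z(t)=X(t)Y_0^\dag$). Nothing further is needed.
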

\begin{proof}
Since \eqref{cqs} is a right-invariant system, one has the following well-know property. For any fixed $Z \in \mbox{U}(n)$
and any fixed set of piecewise-continuous inputs $u_k(t)$, $k =1, \dots, m$, then $X(t)$ is a
solution of \refeq{cqs} if and only if $Y(t) = X(t) Z$ is a solution
of \refeq{cqs} with the same applied inputs. Now, solve the problem
 for $X(0) = X_0 = I$ and $X_{goal} = Y_{goal} Y_0^\dag \in \mathcal{W}$ using Theorem \ref{t1} (resp. Theorem~\ref{mainstochasticresult})
 and then apply the corresponding inputs \eqref{deterministiclaws} (resp.\ \eqref{stochasticlaws}) to system
\refeq{cqs}. Note that this corresponds to choose $Z = Y_0$.
\end{proof}

\begin{remark} If one accepts a global phase change on $X_{goal}$ of the form $X_{goal} \in \mbox{U}(n) \mapsto \exp(\iota \phi)X_{goal} \in \mathcal{W} \subset \mbox{U}(n)$, which is immaterial for quantum systems, then it is easy to show that, as the set of eigenvalues of $X_{goal}$ is a discrete subset of $\CC$, one may choose
a convenient $\phi$ such that Theorem~\ref{t1} (deterministic) and Theorem~\ref{mainstochasticresult} (stochastic)   globally solve the $T$-sampling stabilization problem on $\mbox{U}(n)$.
\end{remark}

It could be interesting in some situations to assure global convergence
without accepting a global phase transformation.
The previous result implies\footnote{Another advantage of
this strategy in two steps was verified
 in simulations. In some cases it may generate smaller inputs when
 compared with the one step procedure, even it is combined with
 global phase change.} that one may introduce the following global
strategy for the $T$-sampling stabilization of any initial state $X_0$ towards any goal state $X_{goal}$ that do not obey the assumption of
Corollary \ref{c1}.

\begin{theorem}
 \label{tSteer}(Global Deterministic or Stochastic Method)
Given arbitrary $X_0, X_{goal} \in \mbox{U}(n)$, then one may always
construct $X_1 \in \mbox{U}(n)$ in a way that both $X_1 X_0^\dag$ and $X_{goal}
X_1^\dag$ are in $\mathcal W$. Then one may implement the following piecewise-smooth
controls law in two steps:
\begin{itemize}
 \item \textbf{Step one.} Apply the control law of Corollary \ref{c1} to $Y_0=X_0$ and
  $Y_{goal} = X_1$. Then there exists $L \in \NN$ big enough such
  that $X_{goal} \left. X(t)^{\dag}\right|_{t=LT} =  X_{goal} X(L T)^{\dag} \in \mathcal
  W$.
  \item \textbf{Step two.} At the instant $\overline t =L T$, switch
  the control law to the one of
  Corollary \ref{c1} for $Y_0 = X(L T)$ and $Y_\infty =
  X_{goal}$.
\end{itemize}
This control policy then solves the $T$-sampling stabilization problem.
\end{theorem}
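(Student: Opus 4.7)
The plan is to break the argument into three independent pieces matching the three assertions of the theorem: (i) existence of an intermediate target $X_1$ satisfying the two genericity conditions; (ii) finite-time reachability of the set where the second Corollary~\ref{c1} control law can be activated; (iii) convergence during the second step. The entire argument reduces to Corollary~\ref{c1} plus elementary topology, so I do not expect any deep technical obstacle.

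First, I would establish that the required $X_1$ always exists. The set $\mathcal{W} = \{W \in \mbox{U}(n) : \det(I+W) \neq 0\}$ is open in $\mbox{U}(n)$, and it is also dense, because $\mbox{U}(n) \setminus \mathcal{W}$ is the zero set in $\mbox{U}(n)$ of the real-analytic function $W \mapsto |\det(I+W)|^2$, which is not identically zero (e.g.\ it does not vanish at $I$). Since $\mbox{U}(n)$ acts on itself by right multiplication as an analytic diffeomorphism, the two conditions
\[
X_1 \in \mathcal{W}\, X_0, \qquad X_1 \in \mathcal{W}^{\dag}\, X_{goal} = \mathcal{W}\, X_{goal}
\]
(using that $\mathcal{W} = \mathcal{W}^\dag$, since $-1$ is its own complex conjugate) each define an open dense subset of $\mbox{U}(n)$. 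By Baire's theorem their intersection is nonempty, so an admissible $X_1$ exists. In practice, one may even choose $X_1$ of a convenient form, e.g.\ a small perturbation of $X_0$ inside $\mathcal{W}\, X_0 \cap \mathcal{W}\, X_{goal}$.

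Second, for Step one I would invoke Corollary~\ref{c1} with the data $(Y_0,Y_{goal}) = (X_0, X_1)$, which is admissible precisely because $X_1 X_0^\dag \in \mathcal{W}$. This yields controls (deterministic or stochastic) such that $X(jT) \to X_1$ as $j \to \infty$. By continuity of right multiplication and of $W \mapsto X_{goal} W^\dag$, we have $X_{goal}\, X(jT)^\dag \to X_{goal}\, X_1^\dag \in \mathcal{W}$. Since $\mathcal{W}$ is open, there exists $L \in \NN$ such that $X_{goal}\, X(LT)^\dag \in \mathcal{W}$, which is exactly the condition needed to restart the Corollary~\ref{c1} control law from $X(LT)$ towards $X_{goal}$.

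Third, at time $\bar t = LT$ I would invoke Corollary~\ref{c1} a second time, now with $(Y_0,Y_{goal}) = (X(LT), X_{goal})$. Admissibility is guaranteed by the previous paragraph. Because the reference controls \refeq{refcon} depend on $t$ only through $\sin(\ell \omega t)$ with $\omega = 2\pi/T$ and are $T$-periodic, and because \eqref{cqs} is right-invariant and autonomous in the group variable, relaunching the control law at $t=LT$ is equivalent to restarting from $t=0$ with initial state $X(LT)$; the convergence statement $\lim_{j\to\infty} X(jT) = X_{goal}$ then follows directly. Stitching the two pieces together gives a piecewise-smooth control defined on all of $\RR_+$ that solves the $T$-sampling stabilization problem globally. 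The only subtlety I see is to confirm that the switch at $t=LT$ preserves the piecewise-smooth regularity claimed in the statement, which is automatic since each phase uses controls of the form \eqref{deterministiclaws} or \eqref{stochasticlaws}, smooth in their respective intervals.
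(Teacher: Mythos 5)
Your proof is correct, and the overall two-step structure (existence of $X_1$, openness of $\mathcal{W}$ plus convergence of step one to justify the switch, then a second invocation of Corollary~\ref{c1}) coincides with the paper's. The genuine difference is in how $X_1$ is obtained. You argue by genericity: $\mathcal{W}$ is open and dense, the two constraints $X_1\in\mathcal{W}X_0$ and $X_1\in\mathcal{W}X_{goal}$ (using $\mathcal{W}^\dag=\mathcal{W}$) are each open and dense, so their intersection is nonempty. This is short and robust, but it is an existence argument rather than a construction, whereas the theorem is phrased as ``one may always \emph{construct} $X_1$'' and the paper's emphasis throughout is on explicit, computable control laws. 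The paper instead diagonalizes $W=X_{goal}X_0^\dag=U^\dag D U$ with eigenvalues $e^{\iota\theta_i}$, $\theta_i\in(-\pi,\pi]$, forms the ``half-angle'' square root $W_1=U^\dag D_1 U$ with entries $e^{\iota\theta_i/2}$ (which automatically lies in $\mathcal{W}$ since $\theta_i/2\in(-\pi/2,\pi/2]$), and sets $X_1=W_1X_0$; this gives the pleasant identity $X_1X_0^\dag=X_{goal}X_1^\dag=W_1$, i.e.\ the two legs are equal halves of the remaining rotation, and it is directly implementable. Your remark that one could ``perturb $X_0$ into the intersection'' would need to be made effective to match the constructive claim; otherwise your argument buys generality and brevity at the cost of explicitness. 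The remaining steps (continuity of $X(jT)^\dag\mapsto X_{goal}X(jT)^\dag$ plus openness of $\mathcal{W}$ to get $L$, and $T$-periodicity/right-invariance to relaunch at $t=LT$) match the paper's reasoning, which invokes continuity of eigenvalues to the same effect.
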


\begin{proof} If $W=X_{goal} X_0^\dag \not\in \mathcal W$, then one may
 write $W = U^\dag D U$, where $U$ is a unitary matrix and $D$ is a diagonal matrix
 whose diagonal entries $d_i= \exp(\iota \theta_i)$, $i = 1, \dots, n$, are the eigenvalues of $W$.
 One can always assume that $\theta_i \in (-\pi, \pi]$, $i=1, \dots, n$.
 Take $W_1 = U^\dag D_1 U$, where $D_1$ is a diagonal matrix with diagonal entries
 given by $d^1_i= \exp(\iota \theta_i/2), i = 1, \ldots, n$. It is clear
 that $W = W_1 W_1$ and $W_1 \in \mathcal W$. Let $X_1 = W_1 X_0$.
 By construction, one has $X_1 X_0^\dag=W_1 \in \mathcal W$ and
 $X_{goal} X_1^\dag = W W_1^\dag = W_1 \in \mathcal W$.
 In the first step of our control law, one has
 $\lim_{j \rightarrow \infty} X(j T) = X_1$.
 By the well-known property of the continuity of eigenvalues, for $ L \in \NN$
 big enough, one has $X_{goal} X(L T)^\dag \in \mathcal W$.
Then, at $\overline{t} = L T$, if one switches the control law to the one of
Corollary \ref{c1} with $Y_0 = X(L T)$ and $Y_{goal} = X_{goal}$,
the claimed properties hold.
\end{proof}

\subsection{Proof of the Main Deterministic Result}\label{mrtd}

The technical details involved in the proof of Theorem~\ref{t1} are given in the sequel.

Let $u_k$:~$\RR \rightarrow \RR$, $k = 1, \ldots, m$, be an arbitrary set of smooth controls and fix any initial condition $X(0) \in \mbox{U}(n)$ in \eqref{cqs}. Define
\begin{equation}\label{rss-m}
    \begin{array}{l}
        A(t) = \sum_{k=1}^{m} u_{k}(t) S_k \in \mathfrak{u}(n), \medskip \\
        B^0_k(t) = S_k X(t), \medskip \\
        B^{j+1}_k(t) = \dot{B}^{j}_k(t) - A(t)B^j_k(t),
    \end{array}
\end{equation}
for each $j \in \NN$, $k= 1,\dots, m$, $t \in \RR$, where $X(t)$ is the corresponding solution of \eqref{cqs}.
One remarks that the linearized system of (\ref{cqs}) along the
trajectory  $(X(t), u_k(t), k=1\dots, m , t \in
\RR)$ is given by the time-varying linear control system
$\dot{X}_\ell(t) = A(t) X_\ell(t) + \sum_{k=1}^m w_k(t) B^0_k(t)$, where
$X_\ell \in \CC^{n \times n}$ is the state and $w_k \in \RR$
are the controls \cite{Cor94, Cor07}. Let $[E, F] = E F - F E$ be the usual commutator of the matrices $E, F \in
\CC^{n\times n}$. Define
\begin{equation}
 \begin{array}{rcl}\label{defCkj}
  C_k^0(t) & = & S_k, \\
  C_k^{j+1}(t) & = & \dot{C}_k^{j}(t) + \left[ {C}_k^{j}(t),  A(t) \right],
 \end{array}
\end{equation}
for $k =1, \ldots m$, $j \in \NN$, $t \in \RR$. It is straightforward to conclude that $C^j_k(t) X(t)=B^j_k(t)$. In particular,
$C^j_k(0)=B^j_k(0)$ when $X(0) =I$.

If the smooth controls $u_k$ are $T$-periodic and
  \begin{equation}
  \label{sinus}
u_k(T-t) = -u_k(t), \quad \mbox{for } t \in \RR, \; k=1, \dots, m,
   \end{equation}
then the solution $X(t)$ of \refeq{cqs} is also $T$-periodic (see \cite{Cor07} for details). It is clear that $\overline{u}_k$ in \eqref{refcon} is $T$-periodic
and satisfies \eqref{sinus}.
\begin{definition}\label{rsd}
Let $T >0$. System (\ref{cqs}) is $T$-\emph{regular}  when there
exist smooth $T$-periodic inputs $\widehat{u}_k$:~$\RR
\rightarrow \RR$, $k=1,\dots, m$, called
here \emph{Coron reference controls}, such that \refeq{sinus} holds
and the corresponding $T$-periodic solution $\widehat X$:~$\RR \rightarrow
\mbox{U}(n)$ of (\ref{cqs}) with initial condition $\widehat
X(0)=I$ and $u_k = {\widehat{u}}_k$ satisfies
\begin{align}\label{rss}
    \mathfrak{u}(n) = \mbox{span} \{ B^j_k(0), \mbox{ for all } k=1,\dots, m, \; j \in \NN \},
\end{align}
where the $B_k^j$'s are as in \eqref{rss-m} for the inputs $\widehat{u}_k$. Note that $C_k^j(0)=B_k^j(0)$ for such $\widehat{u}_k$.
When (\ref{cqs}) is $T$-regular for every $T > 0$, one simply says
that system (\ref{cqs}) is \emph{regular}.
\end{definition}

\begin{remark}\label{atp}
If the control problem is solved for some $T > 0$, it will then be
solved for all $\overline{T} > 0$. This relies on standard
time-scaling arguments. In fact, if $X_a(t)$ is a solution of
\refeq{cqs} when the inputs are given by $u^a_k(t), k = 1, \ldots,
m$, then for every $\alpha > 0$, one has that $X_b(t) =
X_a(\alpha t)$ is a solution corresponding to the inputs $u_k^b(t) =
\alpha u_k^a(\alpha t), k = 1, \ldots, m$.
\end{remark}

By redefining
the Coron reference controls $\widehat{u}_k$ as described in Remark~\ref{atp}, it is clear
that system (\ref{cqs}) is regular whenever it is $T$-regular for
\emph{some} $T > 0$. The next theorem, which relies on the results of the Return
Method developed in \cite{Cor94}, establishes that the regularity
of (\ref{cqs}) is in fact equivalent to its controllability on
$\mbox{U}(n)$.

\begin{theorem}\label{crm}
The following assertions are equivalent for system \eqref{cqs}:
\begin{enumerate}
 \item System (\ref{cqs}) is regular;
 \item System (\ref{cqs}) is $T$-regular for some $T > 0$;
 \item $\mbox{Lie}\{S_1, \dots, S_m \}=\mathfrak{u}(n)$.
\end{enumerate}
\end{theorem}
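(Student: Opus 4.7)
The plan is to split the three-way equivalence into the chain $(1)\Rightarrow(2)\Rightarrow(3)\Rightarrow(1)$, where the first implication is trivial and the second follows from a clean induction, while the last is the substantive step and will be settled by invoking Coron's Return Method together with a genericity argument on the harmonic parameterization \refeq{refcon}.

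First I would dispose of $(1)\Rightarrow(2)$, which is immediate from Definition~\ref{rsd}, and then of $(2)\Rightarrow(3)$. For the latter, fix Coron reference controls $\widehat{u}_k$ witnessing $T$-regularity and the resulting reference trajectory $\widehat{X}(t)$ with $\widehat{X}(0)=I$. Observing that $B^j_k(0)=C^j_k(0)$ in this case, I would argue by induction on $j$ that $C^j_k(0)$ is a real linear combination of iterated brackets of $S_1,\dots,S_m$: the base case $C^0_k(0)=S_k$ is trivial, and the recursion $C^{j+1}_k = \dot C^j_k + [C^j_k,A(t)]$ with $A(t)=\sum_{k} \widehat{u}_k(t) S_k$ shows that derivatives in $t$ produce only polynomials in the $\widehat{u}_k^{(r)}(0)$ multiplying iterated brackets, while the commutator term adds exactly one new bracket with an $S_k$. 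Hence $\operatorname{span}\{B^j_k(0)\}\subset \mathrm{Lie}\{S_1,\dots,S_m\}$, and \eqref{rss} forces $\mathrm{Lie}\{S_1,\dots,S_m\}=\mathfrak{u}(n)$.

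The core of the proof is $(3)\Rightarrow(1)$. Here I would fix any $T>0$ and use the harmonic reference controls $\overline{u}_k(t;\mathbf{a}) = \sum_{\ell=1}^M a_{k,\ell}\sin(\ell\omega t)$ from \refeq{refcon}; these are smooth, $T$-periodic, and automatically satisfy \refeq{sinus}, so they are candidate Coron reference controls. For each choice of $(M,\mathbf{a})$, the matrices $B^j_k(0)$ depend real-analytically on $\mathbf{a}\in\RR^{mM}$, so the set
\[
\mathcal{B}_{M,N}=\{\mathbf{a}\in\RR^{mM} : \operatorname{span}\{B^j_k(0) : 1\le k\le m,\ 0\le j\le N\}\ne\mathfrak{u}(n)\}
\]
is the zero set of a finite collection of real-analytic functions (minors of suitable Gram matrices). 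Thus either $\mathcal{B}_{M,N}$ has empty interior, or it is all of $\RR^{mM}$. To exclude the latter alternative for sufficiently large $M$ and $N$, I would rely on the Return Method results of \cite{Cor94}: the Lie algebra rank condition $\mathrm{Lie}\{S_1,\dots,S_m\}=\mathfrak{u}(n)$ guarantees that the linearization along \emph{some} trajectory with a nontrivial control is controllable, and truncated Fourier expansions of that control give an admissible $\mathbf{a}$ for large enough $M$. Equivalently, by expanding $C^j_k(0)$ in $\mathbf{a}$ one can exhibit iterated brackets of arbitrary depth among the coefficients, so raising $M$ and $N$ eventually produces the full Lie algebra in the span. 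This yields a $T$-regular system, and by the time-scaling argument of Remark~\ref{atp} the same system is $\overline{T}$-regular for every $\overline{T}>0$, establishing regularity.

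The main obstacle is clearly the step $(3)\Rightarrow(1)$, and within it the delicate part is showing that the span of the $B^j_k(0)$, which live only at $t=0$, actually exhausts $\mathfrak{u}(n)$ rather than a proper subspace. My plan avoids a direct combinatorial computation of the iterated brackets arising in $C^j_k(0)$ by outsourcing the existence claim to \cite{Cor94} and then using the real-analytic dependence on $\mathbf{a}$ to upgrade existence to genericity. If one wanted a self-contained argument, the alternative would be to verify by hand that successive derivatives $\partial^{r}_{a_{k,\ell}}\big|_{\mathbf{a}=0} C^j_k(0)$ hit all nested commutators of the $S_k$'s, which is technically heavier but conceptually the same.
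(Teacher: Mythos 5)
Your proposal is correct and its skeleton matches the paper's ($1\Leftrightarrow 2$ by the time-scaling of Remark~\ref{atp}, and the substantive direction outsourced to Coron), but it differs in two places worth recording. For $2\Rightarrow 3$ the paper simply cites \cite[pp.~360--362]{Cor94}, whereas you give a self-contained induction showing that each $C^j_k(0)=B^j_k(0)$ is a real linear combination of iterated brackets of the $S_k$'s (differentiation in $t$ only changes the polynomial coefficients in the $u_k^{(p)}(0)$, while the commutator with $A(t)$ adds one bracket), so \refeq{rss} forces $\mbox{Lie}\{S_1,\dots,S_m\}=\mathfrak{u}(n)$; this is cleaner than the citation and is essentially the computation the paper only performs later, in Appendix~\ref{aProb1}, for a different purpose. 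For $3\Rightarrow 1$ you invoke the same external input as the paper (\cite[Remark~3.1]{Cor94}) but wrap it in a genericity argument over the harmonic family \refeq{refcon}; that layer is superfluous for Theorem~\ref{crm}, since Definition~\ref{rsd} only requires \emph{some} smooth $T$-periodic controls satisfying \refeq{sinus} and \refeq{rss}, which Coron's result supplies directly --- the paper defers the harmonic/genericity analysis to the separate Theorem~\ref{tProb1}. One imprecision in that extra layer: truncating the Fourier series of the Coron reference controls does not in general reproduce the derivatives $\widehat{u}_k^{(p)}(0)$, and it is exactly these derivatives that determine the $B^j_k(0)$. The correct mechanism, used in Appendix~\ref{aProb1}, is that the linear map from the amplitudes $\mathbf{a}$ to the odd-order derivatives at $t=0$ is an invertible Vandermonde-type system, so one chooses $\mathbf{a}$ to match those derivatives exactly. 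Since the theorem itself does not depend on this step, this is a caveat about your optional strengthening rather than a gap in the proof.
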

\begin{proof}
It was seen above that $1$ and $2$ are equivalent. That $2
\Rightarrow 3$ follows from the presentation in \cite[pp.\ 360,
361, 362]{Cor94}. Finally, $3 \Rightarrow 1$ is established in
\cite[Remark 3.1 on p.\ 377]{Cor94} (see also \cite[pp.\
187--192, 296--298]{Cor07}).
\end{proof}

When a given set of inputs $u_k(t)$, $k=1, \dots, m$, are Coron reference controls, that is
\refeq{rss} holds and $C_k^j(0)=B_k^j(0)$, there must exist integers $k_1, k_2, \ldots k_d
 \in \{1, \ldots, m\}$ and $j_1, j_2, \ldots, j_d \in \NN$ such that
\begin{equation}
 \label{eGenerates}
 \mathfrak{u}(n) = \mbox{span} \left\{ C_{k_h}^{j_h}(0) : h=1, 2, \ldots, d \right\},
\end{equation}
where $d \triangleq \dim \mbox{U}(n)$.
However, note that the
 $C_{k}^j(0)$'s may be computed for any chosen smooth $T$-periodic controls
in \eqref{cqs} (with initial condition $I$ at $t=0$) obeying \refeq{sinus}, but
\emph{a priori}
it is not assured that \refeq{eGenerates} will hold. When \refeq{eGenerates}
is met, then the chosen inputs are indeed Coron reference controls.

The following theorem shows that one may generate  Coron
reference controls with probability one by randomly choosing
the coefficients $a_{k,\ell} \in \RR$ in \eqref{refcon}. Note that its hypothesis is always met whenever system \eqref{cqs}
is controllable, cf. Theorem~\ref{crm}.

\begin{theorem}\label{tProb1} Let $T > 0$. Assume that there exists one set of
Coron reference controls $\overline{u}_k$, $k=1, \dots, m$, for which \refeq{eGenerates} holds.
Let $J = \max \{ j_1, \dots, j_d \}$, where $j_h$, $h=1,\dots,d$, are as in \refeq{eGenerates}. Take $M =  J/2$ (integer
division).
Let $\mathcal O \subset \RR^{m M}$ be the set of all vectors $\bar{\mathbf a} = (\overline{a}_{k,\ell} \in \RR \; : \; k=1, \ldots,
m, \ell=1, \ldots, M) \in \RR^{m M}$ for which \refeq{eGenerates} is met when $\overline{u}_k$, $k=1,\dots,m$, are given by
\refeq{refcon} and $\overline{X}(t)$ is the corresponding $T$-periodic trajectory of \eqref{cqs} with $\overline{X}(0)=I$ and $u_k = \overline{u}_k$.
Then, $\mathcal O$  is a dense open set in $\RR^{m M}$ and its complement $\mathcal{O}^c$ has zero Lebesgue measure.
\end{theorem}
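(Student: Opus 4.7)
The plan is to recast $\mathcal O^c$ as the zero set of a single real polynomial in $\bar{\mathbf a}$, and then to exhibit one point outside that zero set, from which the topological and measure-theoretic conclusions follow automatically. First I would observe that, for reference controls of the form \eqref{refcon}, each matrix $C_k^j(0)$ depends polynomially on $\bar{\mathbf a}$: unfolding the recursion \eqref{defCkj} shows that $C_k^j(0)$ involves the controls only through the values $A^{(i)}(0)$ for $0 \le i \le j-1$, and termwise differentiation gives $u_l^{(i)}(0) = \omega^i \sin(i\pi/2) \sum_\ell \ell^i a_{l,\ell}$, which is linear in $\bar{\mathbf a}$ (and vanishes identically for even $i$). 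Identifying $\mathfrak u(n) \cong \RR^d$ via a fixed basis and stacking the coordinates of the $d$ vectors $C_{k_1}^{j_1}(0), \dots, C_{k_d}^{j_d}(0)$ as columns of a $d \times d$ matrix $V(\bar{\mathbf a})$, one has $\mathcal O = \{ P \neq 0 \}$ with $P(\bar{\mathbf a}) \triangleq \det V(\bar{\mathbf a})$ a real polynomial. Hence $\mathcal O$ is automatically open, and provided $P \not\equiv 0$ its complement is a proper real-algebraic subset of $\RR^{mM}$, therefore nowhere dense and of Lebesgue measure zero.

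The crux is thus to produce one $\bar{\mathbf a}^\star$ with $P(\bar{\mathbf a}^\star) \neq 0$. My plan is to choose $\bar{\mathbf a}^\star$ so that the associated sinusoidal controls $\overline u_k(\cdot; \bar{\mathbf a}^\star)$ have the same Taylor jet at $t=0$ up to order $J-1$ as the hypothesized Coron reference controls $\widehat u_k$. Both families satisfy the symmetry \eqref{sinus}, and differentiating that identity together with $T$-periodicity forces all even-order derivatives at $0$ to vanish, so only the $M$ odd orders $i \in \{1, 3, \dots, 2M-1\}$ need to be matched. For each fixed $k$, the assignment
\[
(a_{k,1}, \dots, a_{k,M}) \mapsto \bigl(\overline u_k^{(1)}(0), \overline u_k^{(3)}(0), \dots, \overline u_k^{(2M-1)}(0)\bigr)
\]
has matrix with $(j, \ell)$-entry $(-1)^{j-1} \omega^{2j-1} \ell^{2j-1}$; pulling $\ell$ out of each column gives, up to nonzero row scalings, a Vandermonde matrix in the distinct squares $\ell^2 \in \{1, 4, \dots, M^2\}$, which is invertible. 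Hence any prescribed tuple of odd derivatives $(\widehat u_k^{(2j-1)}(0))_{j=1}^{M}$ can be realized by an appropriate $\bar{\mathbf a}^\star$.

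Since $C_{k_h}^{j_h}(0)$ depends only on $A^{(i)}(0)$ for $i \le j_h - 1 \le J-1$, the matching above forces $C_{k_h}^{j_h}(0)[\bar{\mathbf a}^\star] = C_{k_h}^{j_h}(0)[\widehat u]$ for every $h$, and by hypothesis these $d$ vectors span $\mathfrak u(n)$. Thus $V(\bar{\mathbf a}^\star)$ is non-singular, $P \not\equiv 0$, and the claimed properties of $\mathcal O$ follow. The hard part of the plan is the counting that justifies the choice $M = \lfloor J/2 \rfloor$: one has to exploit the symmetry \eqref{sinus} to annihilate all even-order derivatives at $0$ automatically, leaving exactly the $M$ odd orders $1, 3, \dots, 2M-1$ to be adjusted, which is precisely what the $M \times M$ Vandermonde system in the distinct squares can solve.
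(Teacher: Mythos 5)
Your proposal is correct and follows essentially the same route as the paper's proof in Appendix~\ref{aProb1}: express membership in $\mathcal O$ as the nonvanishing of a polynomial determinant in $\bar{\mathbf a}$ (via the fact that $C_k^j(0)$ depends only on the odd-order derivatives $u_l^{(i)}(0)$, which are linear in the amplitudes), invert the Vandermonde system in the distinct squares $\ell^2$ to match the odd jet of the given Coron reference controls at $t=0$, and invoke the standard fact that the zero set of a nonzero polynomial is closed, nowhere dense and Lebesgue-null. The counting $M=\lfloor J/2\rfloor$ and the use of the symmetry \eqref{sinus} to kill the even-order derivatives also match the paper exactly.
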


\begin{proof}
 See Appendix \ref{aProb1}.
\end{proof}

From now on, $\overline{u}_k$, $k=1, \dots, m$, will denote a choice
of Coron reference controls and $\overline{X}_I(t)$ will be the
corresponding $T$-periodic trajectory of \eqref{cqs} with
$\overline{X}_I(0)=I$ and $u_k=\overline{u}_k$. Given any desired
goal state $X_{goal} \in \mbox{U}(n)$, define
\begin{align}\label{rtd-r}
    \overline{X}(t) = \overline{X}_I(t) X_{goal}, \quad t \in \RR.
\end{align}
Note that $\overline X(t)$ is the resulting $T$-periodic reference trajectory of \eqref{rsys} with $\overline X(0) = X_{goal}$ and the same controls. Our main stability
result is now presented.

\begin{theorem} \label{tMain} Choose $T > 0$. Let $X_0=X(0)=I$ and $X_{goal} \in \mathcal W$.
Let $\overline{u}_k, k =1, \ldots, m$, be Coron reference controls and
let $\overline X(t)$ be the $T$-periodic reference trajectory of \refeq{rsys} given in \eqref{rtd-r}.
Then the control laws $\widetilde{u}_k$ in \refeq{feedback} solve the $T$-sampling stabilization problem,
that is \refeq{ctez-i} holds for the closed-loop system (\ref{tesys},\ref{feedback}).
\end{theorem}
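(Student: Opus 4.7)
The plan is to apply the $T$-periodic version of LaSalle's invariance theorem to the closed-loop error system (\ref{tesys},\ref{feedback}) on the compact, positively invariant set $K$ defined in (\ref{eK}). Because $\mathcal{V}$ is smooth, non-negative, vanishes exactly at $\widetilde{X}=I$, and satisfies $\dot{\mathcal{V}}\leq 0$ on $K$, while $\widetilde{S}_k(t)$ and hence the closed-loop vector field is $T$-periodic in $t$, LaSalle gives that $\widetilde{X}(t)$ approaches the largest positively invariant subset $\mathcal{E}\subset K$ of
\[
\{\widetilde{X}\in K \; : \; \dot{\mathcal{V}}(\widetilde{X},t)=0 \text{ for all } t\in[0,T]\}.
\]
It suffices to prove $\mathcal{E}=\{I\}$, since the $\omega$-limit set is non-empty and the convergence of $\mathcal{V}(\widetilde{X}(t))$ together with $\mathcal{V}^{-1}(0)=\{I\}$ then yields $\widetilde{X}(t)\to I$, whence (\ref{ctez-i}) and in particular $X(jT)\to X_{goal}$.

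The identification of $\mathcal{E}$ proceeds in two steps. First, formula (\ref{negdef}) shows that $\dot{\mathcal{V}}\equiv 0$ along a trajectory is equivalent to $\widetilde{u}_k(\widetilde{X}(t),t)\equiv 0$ for every $k$. Plugged into (\ref{tesys}), this forces $\dot{\widetilde{X}}\equiv 0$, so any trajectory contained in $\mathcal{E}$ is a constant $\widetilde{X}_0\in\mathcal{W}$. Invariance then requires $\widetilde{u}_k(\widetilde{X}_0,t)=0$ for all $t\in\RR$ and $k=1,\dots,m$, i.e.
\[
\mathrm{Tr}\bigl(P\,\widetilde{S}_k(t)\bigr)=0,\qquad P\triangleq \widetilde{X}_0(\widetilde{X}_0-I)(\widetilde{X}_0+I)^{-3}\in\mathfrak{u}(n),
\]
for every $t\in\RR$ and $k$, the matrix $P$ being well-defined and skew-Hermitian because $\widetilde{X}_0\in\mathcal{W}$.

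Second, I convert this time-identity into algebraic identities at $t=0$ using the Coron regularity of $\overline{u}_k$. Writing $\overline{X}(t)=\overline{X}_I(t)X_{goal}$ from (\ref{rtd-r}), set $Q\triangleq X_{goal}\,P\,X_{goal}^{\dag}\in\mathfrak{u}(n)$ and $\widehat{S}_k(t)\triangleq \overline{X}_I^{\dag}(t)S_k\overline{X}_I(t)$, so that $\mathrm{Tr}(P\widetilde{S}_k(t))=\mathrm{Tr}(Q\widehat{S}_k(t))$. A short induction, using $\dot{\overline{X}}_I=A(t)\overline{X}_I$ with $A=\sum_k\overline{u}_k S_k$, yields
\[
\frac{d^{j}}{dt^{j}}\widehat{S}_k(t)=\overline{X}_I^{\dag}(t)\,C_k^{j}(t)\,\overline{X}_I(t),
\]
with $C_k^{j}$ as in (\ref{defCkj}). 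Differentiating $\mathrm{Tr}(Q\widehat{S}_k(t))\equiv 0$ at $t=0$, where $\overline{X}_I(0)=I$, gives $\mathrm{Tr}(Q\,C_k^{j}(0))=0$ for every $k$ and every $j\in\NN$. By the Coron reference property (\ref{eGenerates}), the matrices $C_{k_h}^{j_h}(0)$ span $\mathfrak{u}(n)$, hence $\mathrm{Tr}(QR)=0$ for every $R\in\mathfrak{u}(n)$. Because $Q$ itself lies in $\mathfrak{u}(n)$ and the pairing $(A,B)\mapsto -\mathrm{Tr}(AB)$ is an inner product on $\mathfrak{u}(n)$, one deduces $Q=0$ and therefore $P=0$. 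Invertibility of $\widetilde{X}_0$ and of $(\widetilde{X}_0+I)^{-3}$ on $\mathcal{W}$ then forces $\widetilde{X}_0=I$, so $\mathcal{E}=\{I\}$ as required.

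The step I expect to demand the most care is the passage from $\mathrm{Tr}(P\widetilde{S}_k(t))\equiv 0$ to the algebraic identities $\mathrm{Tr}(Q\,C_k^{j}(0))=0$: the relation $\overline{X}_I^{\dag}(t)C_k^{j}(t)\overline{X}_I(t)=d^{j}\widehat{S}_k/dt^{j}$ has to be proved by induction on $j$, and one must justify that identical vanishing of a real-analytic function on $\RR$ kills every Taylor coefficient at $0$, which is automatic because the reference controls $\overline{u}_k$ are trigonometric polynomials and $\overline{X}_I(t)$ is real-analytic. The LaSalle step itself is routine once $K$ has been identified as a compact positively invariant subset of $\mathcal{W}$.
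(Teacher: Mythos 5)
Your proof is correct and follows essentially the same route as the paper: LaSalle's invariance principle for $T$-periodic systems on the compact positively invariant set $K$, the observation that $\dot{\mathcal V}\equiv 0$ forces $\widetilde u_k\equiv 0$ and hence a constant trajectory $\widetilde X_0$, and then differentiation of $\mbox{Tr}\bigl(P\,\widetilde S_k(t)\bigr)\equiv 0$ at $t=0$ to produce $\mbox{Tr}\bigl(Q\,C_k^{j}(0)\bigr)=0$ and invoke the spanning property \eqref{eGenerates}; your induction giving $\tfrac{d^{j}}{dt^{j}}\widehat S_k=\overline X_I^{\dag}C_k^{j}\overline X_I$ is the same computation the paper performs directly on the traces. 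The one genuine divergence is the last step: you conclude $Q=0$ (hence $P=0$ and $\widetilde X_0=I$) immediately from the fact that $Q\in\mathfrak{u}(n)$ and $(A,B)\mapsto-\mbox{Tr}(AB)$ is an inner product there, whereas the paper diagonalizes $\widehat X$, computes $\widehat Z=U^{\dag}\,\diag(\iota\alpha(a_1),\dots,\iota\alpha(a_n))\,U$ with $\alpha(a)=\tan(a/2)/[2\cos(a/2)]^{2}$, and uses injectivity of $\alpha$. Your ending is shorter and entirely adequate here; the paper's explicit eigenvalue representation is not wasted, though, since the properties of $\alpha$ and the relation between $\widehat Z$ and $\widetilde X$ are reused in Lemma~\ref{lExp} to prove exponential convergence.
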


\begin{proof} The idea of the proof is the application of Lasalle's invariance principle,  Theorem \ref{tLaSalle} in Appendix
\ref{aLasalle}. For this, consider the closed-loop system (\ref{tesys},\ref{feedback}) with
state evolving on $S = \{ \tilde X \in \CC^{n\times n} \; | \; \det
(I+X) \neq 0\}$. In this case, one is regarding a system evolving on
an open set of a (complex) Euclidean space\footnote{One could
restate Theorem \ref{tLaSalle} for smooth manifolds without any
problem. However the notation becomes awful, and a notion of
distance must be chosen,  for instance the one that $\mbox{U}(n)$
inherits from $\CC^{n \times n}$.}. Recall that the set $K \subset
\mathcal W \subset S$ defined by \refeq{eK} is a compact positively invariant
set\footnote{At this moment, one is considering that $K$ is compact set with respect to the
topology of the Euclidean space $\CC^{n \times n}$. As $\mbox{U}(n)
\subset \CC^{n \times n}$ is an embedded manifold, the topology of
$\mbox{U}(n)$ is equivalent to the subspace topology induced by
$\CC^{n \times n}$, and so this distinction is of minor
importance.}.

Then, for any $\widetilde X(0) = \widehat{X} \in K$,
 the solution $\widetilde X(t)$ of
(\ref{tesys},\ref{feedback}) is defined for all $t \geq 0$
and remains inside  $K$. Since (\ref{tesys},\ref{feedback}) is a
$T$-periodic system, then by Theorem~\ref{tLaSalle} it suffices to show
that the set
\[
  E = \{ (\widehat X,t) \in   K \times \RR^+ \; | \; \dot {\mathcal V}(\widehat{X}  ,t) = 0  \}
\]
does not contain any nontrivial solution $(\widetilde X(t), t)$,  $t\geq
0$, of (\ref{tesys},\ref{feedback}), that is only the trivial
solution $(I, t)$, $t \geq 0$ is contained in $E$.  For that, according to \eqref{negdef},  $\dot {\mathcal V}(\widetilde X(t),t) =0$ for all $t \geq 0$ along a solution implies that the control law
\refeq{feedback} is identically zero. Hence, such a solution must be
an equilibrium point of \refeq{tesys}, that is $\widetilde X(t)$
must identically equal to its initial condition $\widetilde X(0) = \widehat X$. Now, in
\refeq{negdef}, let
 \begin{equation}
  \label{Zhat}
\widehat{Z}=\widehat{X}(\widehat{X}-I)(\widehat{X}+I)^{-3}.
 \end{equation}
Suppose that $\mbox{Tr}\left(\widehat{Z}\overline{X}^{\dag}(t) S_k
\overline{X}(t)
\right)=\mbox{Tr}\left(\widehat{Z}\overline{X}^{\dag}(t) C_k^0(t)
\overline{X}(t) \right)=0$ for $t \in [0,T]$, $k=1, \ldots , m$.
It will be shown by induction that, for $k=1, \ldots , m$, one has
\begin{equation}\label{induction}
  \mbox{Tr}\left(\widehat{Z}\overline{X}^{\dag}(t) C_k^j(t) \overline{X}(t)
\right) = 0, \quad \mbox{for } t \in [0,T], \; j \in \NN.
\end{equation}
This is true for $j=0$ and assume it holds for a fixed $j \in \NN$.
Taking the derivative at both sides of \refeq{induction} and using \eqref{defCkj}, one
obtains
\[
\begin{array}{l}
\mbox{Tr}\left(\widehat{Z} \left[\dot{\overline{X}}^{\dag}(t)
C_k^j(t) \overline{X}(t) \right] \right) +
\mbox{Tr}\left(\widehat{Z} \left[ \overline{X}^{\dag}(t)
\dot{C}_k^j(t) \overline{X}(t) \right] \right)    +
\mbox{Tr}\left(\widehat{Z} \left[
\overline{X}^{\dag}(t) C_k^j(t) \dot{\overline{X}}(t) \right] \right)
 \\
 =  \mbox{Tr}\left(\widehat{Z}\left( \overline{X}^{\dag}(t)
\dot{C}_k^j(t) \overline{X}(t) \ \right) \right)   +
\mbox{Tr}\left(\widehat{Z}\left( \overline{X}^{\dag}(t) \left[
C_k^j(t), \sum_{\ell=1}^m u_\ell^T(t) S_\ell \right] \overline{X}(t)
\right) \right)
 \\
  = \mbox{Tr}\left(\widehat{Z} \overline{X}^{\dag}(t) C_k^{j+1}(t)
\overline{X}(t) \right) = 0, \quad \mbox{for } t \in [0,T].
\end{array}
\]
Hence \refeq{induction} holds. Taking $t=0$, one gets
\[
\mbox{Tr}\left(\widehat{Z} \overline{X}(0)^{\dag} C_k^j(0)
\overline{X}(0) \right) = \mbox{Tr}\left(\widehat{Z}
X_{goal}^{\dag} C_k^j(0) X_{goal} \right) = 0.
\]
By \refeq{rss} and from the fact that $C_k^j(0)$=$B_k^j(0)$, to
conclude the proof it suffices to show that $\mbox{Tr} \left(
\widehat Z \Sigma \right)=0$, for $\widehat Z$ given by \refeq{Zhat}
and all $\Sigma \in \mathfrak{u}(n)$, implies that $\widehat X = I$.
Recall that one may write $\widehat X = U^\dag D U$, where $U$
is unitary, $D =\diag( \exp{(\iota a_1)}, \ldots,
\exp{(\iota a_n)})$ is a diagonal matrix, and $d_i=\exp{(\iota a_i)}, i=1,
\ldots, n$, are the eigenvalues of $\widehat X$ with $a_i \in (-\pi,
\pi)$. Note that $\det \widehat X =1$ implies that $\sum_{i=1}^{n}
a_i=0$ (mod $2\pi$). Simple computations using the identities $\exp{(\iota a)} - 1 =
2\iota  \sin(a/2) \exp{(\iota a/2)}$ and $\exp{(\iota a)} + 1 = 2 \cos(a/2)
\exp{(\iota a/2)}$ results in $\widehat Z = U^{\dag} D_1 U$, where $D_1 =
\diag(\iota  \alpha(a_1), \ldots ,  \iota \alpha(a_n))$ with
$\alpha(a_i) = \tan(a_i/2)/ [2\cos(a_i/2)]^2$. It is easy to
show\footnote{Simple computations show that $\lim_{a \to \pm \pi}
\alpha(a) = \pm \infty$ and $\frac{d\alpha}{da}(a) =
\frac{3- 2\cos(a/2)}{8 [\cos(a/2)]^4}
>0$, for $a \in -(\pi, \pi)$.} that the function $(-\pi, \pi) \ni a \mapsto
\alpha(a) = \tan(a/2)/ [2\cos(a/2)]^2 \in \RR$ is injective, it is surjective (onto $\RR$), and $\alpha(0) =0$. Now, taking the
matrices in $\mathfrak{u}(n)$ of the form $U^{\dag} \Xi U$ with
 \begin{equation}
  \label{sunchoice}
\Xi = \diag( 0, \ldots, 0,  0, \ldots, 0, -\iota, 0, \ldots, 0) \in \mathfrak{u}(n)
 \end{equation}
and using the invariance of the trace, one obtains that all the
diagonal entries of $D_1$ are zero. As the map $\alpha$ is
injective, it follows that $a_i= 0, i=1, \ldots, n$, and this concludes
the proof.
\end{proof}

\begin{proof} (\emph{of Theorem~\ref{t1}})
A straightforward consequence of Theorem
\ref{crm}, Theorem \ref{tProb1} and Theorem \ref{tMain}.
\end{proof}

\subsection{Proof of the Main Stochastic Result}\label{proofstochastic}

This subsection presents the proof of Theorem~\ref{mainstochasticresult}. First of all, the tracking error $\widetilde{X}(t)$ is sampled with
sampling period $T$ in order to apply stochastic Lyapunov stability
results that will assure that $\lim_{j \to \infty}
\widetilde{X}(jT)=I$. Now, for each sampling interval $[jT,(j+1)T)$, $j
\in \NN$:
\begin{itemize}
    \item One considers the reference controls as in \eqref{refconrandom};
    \item One defines similarly the reference trajectory $\overline{X}(t)$,
the tracking error $\widetilde{X}(t)$ and the feedbacks
$\widetilde{u}_k(t)$ by taking $\overline{u}_k(t)$ as in
\eqref{refconrandom};
    \item One lets $\widetilde{X}_j=\widetilde{X}(jT) \in \mathcal{W}$ for $j
\in \NN$.
\end{itemize}

The vector field of the closed-loop system
(\ref{tesys},\ref{feedback}) with the choice \refeq{refcon}
depends smoothly on the reference controls parameters $a_{k,\ell}$.
Recall that
$\widetilde{S}_k(t)=\overline{X}^{\dag}(t)S_k\overline{X}(t)$, where
the reference trajectory $\overline{X}(t)$ is the solution of
\eqref{rsys} with the reference controls $\overline{u}_k(t)$ in
\eqref{refcon}. Let $\Lambda$:~$\RR \times \RR \times
\mathcal{W} \times \RR^{mM} \rightarrow \mathcal{W}$ be the
($\mathbf{a}$-parameter dependent) smooth global flow of the closed-loop system
(\ref{refcon},\ref{tesys},\ref{feedback}). This means that
$\widetilde{X}(t)=\Lambda(t,t_0,\widehat{X},\mathbf{a}) \in
\mathcal{W}$,
 is the  solution of
system (\ref{tesys},\ref{feedback}) with the choice
\eqref{refcon} and with initial condition
$\widetilde{X}(t_0)=\widehat{X} \in \mathcal{W}$ at $t=t_0$ and
reference controls parameters $\mathbf{a}=(a_{k,\ell}) \in
\RR^{mM}$. In particular, the map $(\widetilde{X},\mathbf{a})
\ni \mathcal{W} \times \mathbf{A} \mapsto
\Lambda(T,0,\widetilde{X},\mathbf{a}) \in \mathcal{W}$ is
continuous. Since system
(\ref{refcon},\ref{tesys},\ref{feedback}) is $T$-periodic in
$t$, one has that $\Lambda(t+jT,jT,\widetilde{X},\mathbf{a}) =
\Lambda(t,0,\widetilde{X},\mathbf{a})$ for every $t \geq 0$, $j \in \NN$, $(\widetilde{X},\mathbf{a}) \in \mathcal{W} \times \mathbf{A}$
\cite[p.\ 143]{Vidyasagar93}.

The reasoning above implies that $\widetilde{X}_{j+1} =
\Lambda(T,0,\widetilde{X}_{j},\mathbf{a}_j)$, for $j \in \NN$,
where $\widetilde{X}_{0}=X_{goal} \in \mathcal{W}$. Consequently,
$\widetilde{X}_{j}$:~$\NN \rightarrow \mathcal{W}$
is a Markov chain (with respect to the
natural filtration and the Borel algebra on $\mathcal{W}$) because
$\mathbf{a}_j$, $j \in \NN$, are independent random vectors. Note
that \eqref{negdef} assures that $\mathcal{V}(\widetilde{X}_j)$, $j
\in \NN$, is a supermartingale. Define the continuous function
$Q$:~$\mathcal{W} \rightarrow \RR$ as
\begin{align}
\label{eDefQ} Q(\widehat{X}) & \triangleq \frac{-1}{a_{max}^{mM}}
\int_{\mathbf{A}} \left( \int_{0}^{T}
\dot{\mathcal{V}}\left(\Lambda(t,0,\widehat{X},\mathbf{a}),t \right)
\, dt \right) \, d\mathbf{a}
= \frac{-1}{a_{max}^{mM}} \int_{\mathbf{A}} \left( \int_{0}^{T}
\dot{\mathcal{V}}\left(\widetilde{X}(t),t \right) \, dt \right) \,
d\mathbf{a}.
\end{align}
By \eqref{negdef}, $Q$ is non-negative, and \eqref{feedback} gives
that $Q(I)=0$. For each $j \in \NN$, the conditional expectation of
$\mathcal{V}(\widetilde{X}_{j+1})$ knowing $\widetilde{X}_j$ is
denoted by $\mathbb{E}\left(\mathcal{V}(\widetilde{X}_{j+1}) /
\widetilde{X}_j\right)$. Since $\widetilde{X}_{j}$ is independent of
$\mathbf{a}_j$ and $\dot{\mathcal{V}}(\widetilde{X},t)$ is
$T$-periodic in $t$, one gets
\begin{align}
\mathbb{E}\left(\mathcal{V}(\widetilde{X}_{j+1}) /
\widetilde{X}_j\right) -
\mathcal{V}(\widetilde{X}_j)
= \frac{1}{a_{max}^{mM}} \left. \hspace{-2pt} \left[
\int_{\mathbf{A}} \hspace{-2pt} \left( \int_{jT}^{(j+1)T}
\hspace{-6pt}
\dot{\mathcal{V}}\left(\Lambda(t,jT,\widehat{X},\mathbf{a}),t
\right) \, \hspace{-2pt} dt \right) \hspace{-2pt} d\mathbf{a}
\right] \right|_{\widehat{X}=\widetilde{X}_j} =
-Q(\widetilde{X}_j). \label{eQQ}
\end{align}
Standard results on stochastic Lyapunov stability imply that
$\lim_{j \to \infty} Q(\widetilde{X}_j) = 0$ almost surely (see
Theorem~\ref{stochastic} in Appendix). One will show that the only
solution to $Q(\widehat{X}) = 0$ is $\widehat{X}=I$. This will prove
almost sure convergence of $\widetilde{X}_j=\widetilde{X}(jT)$
towards $I$ because $Q$ is continuous and $\widetilde{X}_j$ evolves
on the compact set $K=\{\widetilde{X} \in \mathcal{W} \; | \; 0 \leq
\mathcal{V}(\widetilde{X}) \leq \mathcal{V}(X_{goal}) \}$ in
$\mathcal{W}$.

Assume that $\widehat{X} \in \mathcal{W}$ is such that
$Q(\widehat{X}) = 0$. Then, $\dot{\mathcal{V}}(\Lambda(t,0,\widehat{X},\mathbf{a}),t)=0$
for $t \in [0,T]$ and $\mathbf{a}=(a_{k,\ell}) \in
\mathbf{A}$. In particular, \refeq{negdef} implies that
$\widetilde{u}_k(t)=0$ for $t \in [0,T)$. By relying on the ideas presented in the end of the proof Theorem~\ref{tMain}, one
concludes that $\widetilde X = I$. The preceding argument then proves
that $\lim_{j \to \infty} \widetilde X ( j T) = I$ almost surely, which completes the proof of Theorem~\ref{mainstochasticresult}.

\subsection{Proof of the Exponential Convergence Result}\label{proofexpconv}

Theorem~\ref{expconv} for  the deterministic strategy is immediate from the result given below
and standard Lyapunov stability results for discrete-time nonlinear systems.

\begin{lemma}
 \label{lExp}
Let $\| \widetilde W \| = \mbox{Tr}(\widetilde{W}^\dag \widetilde{W})$ stand for
the Frobenius norm of $\widetilde{W} \in \CC^{n \times n}$. Then:
\begin{itemize}
\item There exist $\epsilon_1, c_1, c_2 >0$ such that, for every $\widetilde X \in \mbox{U}(n)$
with $\| \widetilde X - I \| < \epsilon_1 $, then
\begin{equation}\label{ec1c2}
c_1 \| \widetilde X - I\|^2 \leq \mathcal V(\widetilde X) \leq c_2 \| \widetilde X - I \|^2.
\end{equation}

\item There exist $\epsilon_2, c_3, c_4 >0$ such that, for every $\widetilde X \in \mbox{U}(n)$
with $\| \widetilde X - I \| < \epsilon_2$, then
\begin{equation}\label{z2ineq}
 c_3 \| \widetilde X - I\|^2 \leq \|Z\|^2 \leq c_4 \| \widetilde X - I\|^2, \quad \mbox{where } Z=\widetilde{X}(\widetilde{X}-1)(\widetilde{X}+I)^{-3}.
\end{equation}

\item If $\mathbf a \in \mathbf A$ is admissible in the sense of Theorem~\ref{t1}, then there exists $M_{\mathbf a} > 0$ such that, for the trajectory $\widetilde X(t)$ of the closed-loop system (\ref{tesys},\ref{feedback}) with initial condition $\widetilde{X}(0)=X_{goal} \in \mathcal{W}$, there exists $L \in \NN$ big enough
such that, for all $j \geq L$, one has
    \begin{equation}
    \label {Ma}
    {\mathcal V}(\widetilde X((j+1)T)) -  {\mathcal V}(\widetilde X(jT)) \leq - M_{\mathbf a}\|\widetilde{X}(jT) - I \|^2.
    \end{equation}
\end{itemize}

\end{lemma}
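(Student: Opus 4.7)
Proof plan: Parts (i) and (ii) are straightforward local comparisons via the eigen-decomposition of $\widetilde X$. Writing $\widetilde X = U^\dag \diag(e^{\iota\theta_1},\dots,e^{\iota\theta_n})U$ with $\theta_j\in(-\pi,\pi)$, one has $\|\widetilde X - I\|^2 = 4\sum_j\sin^2(\theta_j/2)$, while the footnote to \eqref{lyap} gives $\mathcal V(\widetilde X)=\sum_j\tan^2(\theta_j/2)$. Since $\tan^2(\theta/2)/\sin^2(\theta/2)=\sec^2(\theta/2)\to 1$ as $\theta\to 0$, both quantities are comparable to $\sum_j\theta_j^2$ for $\widetilde X$ sufficiently close to $I$, yielding \eqref{ec1c2}. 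For (ii), a short computation in the same eigenbasis (essentially already performed at the end of the proof of Theorem~\ref{tMain}) shows that $Z$ has eigenvalues $\iota\tan(\theta_j/2)/[4\cos^2(\theta_j/2)]$, hence $\|Z\|^2=\sum_j\tan^2(\theta_j/2)/[16\cos^4(\theta_j/2)]$, which is again comparable to $\sum_j\theta_j^2$ near $\widetilde X=I$ and gives \eqref{z2ineq}.

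The substantive part is (iii). The plan is to linearize the closed-loop tracking-error dynamics around $\widetilde X=I$ and show that the one-period drop of $\mathcal V$ is, to leading order, a positive-definite quadratic form on $\mathfrak u(n)$. Since $\mathbf a$ is admissible, $\widetilde X(jT)\to I$, so for $j\geq L$ (chosen below) the state lies in an arbitrarily small neighborhood of $I$. Working in exponential coordinates $\widetilde X = \exp(A)$ with $A\in\mathfrak u(n)$, set $A_j=\log\widetilde X(jT)$; by (i)--(ii) the quantities $\|A_j\|$, $\|\widetilde X(jT)-I\|$, and $\sqrt{\mathcal V(\widetilde X(jT))}$ are mutually comparable near the identity. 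Taylor expansion of the feedback gives $\widetilde u_k = (f_k/8)\,\mbox{Tr}(A\,\widetilde S_k(t)) + O(\|A\|^2)$, so $\dot A = L(t)A + O(\|A\|^2)$ with $L(t)A = (1/8)\sum_k f_k\,\mbox{Tr}(A\,\widetilde S_k(t))\,\widetilde S_k(t)$. A Gr\"onwall estimate then produces $A(jT+\tau) = \Phi(\tau)A_j + O(\|A_j\|^2)$ uniformly for $\tau\in[0,T]$, where $\Phi$ is the linear flow of $\dot A=L(t)A$. Substituting into $\int_{jT}^{(j+1)T}\dot{\mathcal V}\,dt$ and squaring yields
\begin{equation*}
\mathcal V(\widetilde X((j+1)T))-\mathcal V(\widetilde X(jT)) = -Q(A_j) + O(\|A_j\|^3),
\end{equation*}
where $Q(A)=\tfrac{1}{16}\int_0^T\sum_k f_k\,\mbox{Tr}^2\!\bigl(\Phi(\tau)A\cdot\widetilde S_k(\tau)\bigr)\,d\tau\geq 0$.

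The crux is to show that $Q$ is strictly positive on $\mathfrak u(n)\setminus\{0\}$. Suppose $Q(A)=0$. Then $\mbox{Tr}(\Phi(\tau)A\cdot\widetilde S_k(\tau))=0$ for all $\tau\in[0,T]$ and $k$; but this is exactly the statement that the linearized feedback vanishes along $\Phi(\cdot)A$, so $\dot{(\Phi(\tau)A)}\equiv 0$ and $\Phi(\tau)A\equiv A$. The condition then reduces to $\mbox{Tr}(A\,\overline X^\dag(\tau) S_k \overline X(\tau))=0$ for all $\tau,k$. Running the induction of Theorem~\ref{tMain} (differentiating in $\tau$ and invoking \eqref{defCkj}) gives $\mbox{Tr}(A\,X_{goal}^\dag C_k^j(0)X_{goal})=0$ for every $k,j$; by admissibility, \eqref{eGenerates} says the $C_k^j(0)$ span $\mathfrak u(n)$, and since the trace pairing $(A,\Sigma)\mapsto-\mbox{Tr}(A\Sigma)$ is non-degenerate on $\mathfrak u(n)$ (for $A=\iota H$ one has $-\mbox{Tr}(A^2)=\mbox{Tr}(H^2)\geq 0$ with equality iff $A=0$), one concludes $A=0$. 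Compactness of the unit sphere in the finite-dimensional $\mathfrak u(n)$ then furnishes $q_{\mathbf a}>0$ with $Q(A)\geq q_{\mathbf a}\|A\|^2$, and choosing $L$ large enough so that the cubic remainder is dominated by $(q_{\mathbf a}/2)\|A_j\|^2$ yields \eqref{Ma} with $M_{\mathbf a}=q_{\mathbf a}/2$, after the trivial conversion between $\|A_j\|^2$ and $\|\widetilde X(jT)-I\|^2$ via (i).

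I expect the main obstacle to be the uniform Gr\"onwall bookkeeping: one needs $A(jT+\tau)=\Phi(\tau)A_j+O(\|A_j\|^2)$ with a constant independent of $\tau\in[0,T]$, and the cubic bound on the remainder after squaring $\mbox{Tr}^2$ hinges on the leading trace being itself $O(\|A_j\|)$; any loss here, e.g.\ a spurious $O(\|A_j\|^2)$ term surviving in the squared integrand, would destroy the quadratic lower bound. A secondary but milder issue is reconciling the ambient distance $\widetilde X(jT)-I$ with the tangent-space parameter $A_j$, which is handled cleanly by the exponential chart and the already-established bilateral bounds of (i).
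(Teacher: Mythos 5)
Your proofs of the first two items coincide with the paper's (both rest on the eigenvalue formulas $\mathcal V(\widetilde X)=\sum_j\tan^2(\theta_j/2)$ and $\|Z\|^2=\sum_j\tan^2(\theta_j/2)/[16\cos^4(\theta_j/2)]$), but your treatment of the third item is correct while taking a genuinely different route. The paper never linearizes: it works with the full nonlinear closed-loop trajectory on the compact sublevel set $K$, first showing by compactness of the unit sphere $\mathbf G\subset\mathfrak{u}(n)$ that $P_{\mathbf a}(Z^*)=\int_0^T\sum_k f_k\,\mbox{Tr}^2(Z^*\widetilde S_k(t))\,dt$ has a positive minimum over constant $Z^*\in\mathbf G$, and then arguing by contradiction that any sequence of initial conditions violating a bound of the form $\int_0^T\sum_k f_k\,\mbox{Tr}^2(Z(t)\widetilde S_k(t))\,dt\geq N_{\mathbf a}\|Z_0\|^2$ would, via Cauchy--Schwarz and an estimate on $\dot Z$, force the normalized trajectories $Z_n(t)/\|Z_{0n}\|$ to converge uniformly to a \emph{constant} $Z^*\in\mathbf G$, so that the integrals tend to $P_{\mathbf a}(Z^*)>0$ while by assumption they tend to $0$. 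Your linearization in exponential coordinates replaces this normalization-and-compactness argument by an explicit positive-definite quadratic form $Q(A)$ plus a cubic remainder; the positivity of $Q$ is established exactly as the paper establishes $P_{\mathbf a}(Z^*)>0$, namely by the induction of Theorem~\ref{tMain} applied to a constant skew-Hermitian matrix, your observation that $Q(A)=0$ freezes the linear flow being the step that reduces to the constant case. The paper's route avoids all Taylor/Gr\"onwall bookkeeping and yields a decay estimate valid on all of $K\setminus\{I\}$; yours is more quantitative, identifying the quadratic form that governs the asymptotic contraction rate. Two caveats are shared by both arguments and are not defects of yours relative to the paper: the key positivity uses the spanning property \refeq{eGenerates} (not merely the convergence $\widetilde X(t)\to I$ that defines admissibility), and both must wait for the trajectory to enter a small neighborhood of $I$ (your choice of $L$; the paper's item (f)). Finally, the obstacle you flag is handled correctly as written: since the leading trace is $O(\|A_j\|)$ and the flow error is uniformly $O(\|A_j\|^2)$ on $[0,T]$, the cross term after squaring is $O(\|A_j\|^3)$ and is absorbed by half of $q_{\mathbf a}\|A_j\|^2$ for $j$ large.
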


\begin{proof} The first claim is an immediate consequence of \refeq{lyaptan} (see Appendix \ref{aCompact}), and the
second one is straightforward from the properties of the function $\alpha$ used in the proof of Theorem~\ref{tMain}.
Consider now that $X_{goal}$ is such that $\| X_{goal} - I \| \leq \min \{ \epsilon_1, \epsilon_2\}$. Then the proof of the third claim follows from the arguments below:
\begin{itemize}

\item[(a)] Let $\mathbf{G}$ be the set of skew-Hermitian matrices of unitary
(Frobenius) norm. It is clear that $\mathbf{G}$ is compact.
By using similar arguments as in the end of
the proof of Theorem~\ref{tMain}, it is easy to show that, for any fixed $Z^* \in \mathbf{G}$, one has
\[
\int_{0}^{T}
\sum_{k=1}^{m} f_k \mbox{Tr}^2(Z^* \widetilde{S}_k(t)) \, dt \triangleq P_{\mathbf a} (Z^*) > 0.
\]

\item[(b)]
As $P_{\mathbf a}(\cdot)$ is continuous in $Z^* \in \mathbf{G}$ and $\mathbf{G}$ is compact, this function admits a minimum $P_{\mathbf a}^* > 0$.
Thus, for any fixed $Z^* \in \mathbf{G}$,
\begin{equation}\label{constantineq}
\int_{0}^{T}
\sum_{k=1}^{m} f_k \mbox{Tr}^2(Z^* \widetilde{S}_k(t)) \, dt  \geq P_{\mathbf a}^* > 0.
\end{equation}

\item[(c)] Fix $X_{goal} \in \mathcal{W}$ and let $K$ be the set in \eqref{eK}. Recall that $K$ is positively invariant and compact, cf. Appendix~\ref{aCompact}. Consider the continuous map $K \ni\widetilde{X} \mapsto Z(\widetilde{X})= \widetilde{X} (\widetilde X - I)(\widetilde X + I)^{-3} \in \CC^{n \times n}$. Let
$Z(t) = Z(\widetilde{X}(t))=\widetilde{X}(t) (\widetilde X(t) - I)(\widetilde X(t) + I)^{-3}$, for $t \geq 0$, where $X(t) \in K$ is the solution of the closed-loop system (\ref{tesys},\ref{feedback}) with initial condition $\widetilde{X}(0) \in K-\{ I \}$. From the same arguments of the end of the proof of Theorem~\ref{tMain} (see
the relationship between $\widehat Z$ and $\widetilde X$), one concludes that  $Z_0 = Z(0) \neq 0$ and $Z_0 / \| Z_0 \| \in \mathbf{G}$. By uniqueness of solutions, $Z(0) \neq 0$ implies that $Z(t) \neq 0$ for $t \in [0, T]$.

\item[(d)] It will be shown by contradiction that there exists $N_{\mathbf{a}} > 0$ such that, for all $\widetilde{X}(0) \in K-\{I\}$, one has
\begin{equation}\label{timeineq}
\int_{0}^{T}
\sum_{k=1}^{m} f_k \mbox{Tr}^2(Z(t) \widetilde{S}_k(t)) \, dt  \geq N_{\mathbf{a}} \| Z_0 \|^2.
\end{equation}
Assume that this is not the case. Hence, for every $n \in \NN$ with $n > 0$, there exists $\widetilde{X}_n(0) \in K-\{ I \}$ such that
\begin{equation}\label{contradiction}
\int_{0}^{T}
\sum_{k=1}^{m} f_k \mbox{Tr}^2(Z_n(t) \widetilde{S}_k(t)) \, dt =  \int_{0}^{T}
\sum_{k=1}^{m} \left(\frac{\widetilde{u}_{k,n} (t)}{\sqrt{f_k}}\right)^2  \, dt <  \dfrac{1}{n} \| Z_{0n} \|^2,
\end{equation}
where $Z_{0n}=Z_n(0)$ and $\widetilde{u}_{k,n} (t) = \widetilde{u}_{k,n}(\widetilde{X}_n(t),t)$ is as in \eqref{feedback}.
Fix $n \in \NN$ with $n > 0$. The Cauchy-Schwartz inequality provides that (see \refeq{CS2}
of Appendix \refeq{D}), for $t \in [0,T]$,
\begin{equation}\label{utilde}
    \int_{0}^{t}  \left|\frac{\widetilde{u}_{k,n} (s)}{\sqrt{f_k}}\right|  \, ds \leq \int_{0}^{T}  \left|\frac{\widetilde{u}_{k,n} (s)}{\sqrt{f_k}}\right|  \, ds \leq  \sqrt{T} \sqrt{\int_{0}^{T} \left(\frac{\widetilde{u}_{k,n} (t)}{\sqrt{f_k}}\right)^2 \, dt}  < \sqrt{T/n} \| Z_{0n} \|.
\end{equation}
Now, using the fact that $X_n(t) \in K$, standard computations show that
\[
    \dot{Z}_n(t) = \sum_{k=1} R_k(X_n(t)) \widetilde{u}_{k,n}(t),
\]
where $R_k(t)=R_k(X_n(t))$ is continuous and uniformly bounded on $[0,T]$ by some $D_k > 0$. Thus, for every $t \in [0, T]$,
\begin{equation}
\label{eZn}
    \left\| \dfrac{Z_n(t) - Z_{0n}}{\|Z_{0n} \|} \right\| = \left\| \int_{0}^t \dfrac{\dot{Z}_n(s)}{\|Z_{n0} \|} \, ds \right\| \leq \sum_{k=1}^m D_k \int_{0}^{t} \frac{| \widetilde{u}_{k,n}(s) |}{\| Z_{0n} \|} \, ds.
\end{equation}
From \refeq{utilde} and \refeq{eZn}, it follows that
 \[
 \left\| \dfrac{Z_n(t) - Z_{0n}}{\|Z_{0n} \|} \right\|
\leq f D \sqrt{T/n},
\]
where $D=D_1 + \dots + D_m > 0$ and $f=\max\{\sqrt{f_1}, \dots, \sqrt{f_m}\} > 0$.
As the sequence $Z_{0n}/\|Z_{0n}\|$ belongs to the compact set $\mathbf{G}$, there exists a convergent subsequence. For simplicity, denote such subsequence by
$Z_{0n}/\|Z_{0n}\|$ and let $Z^* \in \mathbf{G}$ be its limit. It follows that $Z_n(t)/\|Z_{0n}\|$ uniformly converges to $Z^*$ on the interval $[0,T]$ as $n \to \infty$. Consequently, \eqref{constantineq} gives that
\[
    \lim_{n \to \infty} \int_{0}^{T} \sum_{k=1}^{m} \mbox{Tr}^2 \left(\dfrac{Z_n(t)}{\|Z_{n0} \|}  \widetilde{S}_k(t) \right) \, dt =  \int_{0}^{T} \sum_{k=1}^{m} \mbox{Tr}^2(Z^* \widetilde{S}_k(t)) \, dt \geq P_{\mathbf a}^* > 0.
\]
However, \eqref{contradiction} and the fact that $ \mbox{Tr}^2 \left(\dfrac{Z_n(t)}{\|Z_{n0} \|}  \widetilde{S}_k(t) \right) =  \dfrac{1}{\|Z_{n0} \|^2} \mbox{Tr}^2 \left(Z_n(t) \widetilde{S}_k(t) \right)$
implies that the limit above is zero, which is a contradiction.

\item[(e)] Using \eqref{negdef}, \eqref{z2ineq} and \eqref{timeineq}, one obtains
\begin{align*}
\mathcal V(\widetilde X((j+1)T)) -  \mathcal V(\widetilde X(jT)) & = \int_{jT}^{(j+1) T}
- 4 \sum_{k=1}^{m} f_k \mbox{Tr}^2({Z(t)} \widetilde{S}_k(t)) \, dt \\
& \leq
- 4 N_{\mathbf{a}} \| Z(jT) \|^2 \leq - 4 c_3 N_{\mathbf{a}} \| \widetilde{X}(jT) - I \|^2.
\end{align*}

\item[(f)] The results above considered a fixed $X_{goal}$ and the associated compact set $K$. Now, since $\mathcal{V}(I) = 0$, and by Theorem~\ref{t1} one has $\lim_{t \to \infty} \widetilde{X}(t) = I$ for any given initial condition $\widetilde{X}(0) = \overline{X}_{goal} \in \mathcal{W}$ of the closed-loop system, it is clear that there always exists $\overline{t} \geq 0$ such that $\widetilde{X}(\overline{t}) \in K$.

\end{itemize}
\end{proof}

The proof of Theorem~\ref{expconv} for  the stochastic strategy has a similar structure, and is now presented.

\begin{proof}
In this proof one will denote by $\widetilde{X}^{\mathbf a}(t)$ the solution of the closed loop system
(\ref{tesys},\ref{feedback}) with initial condition $\widetilde{X}(0)= X_0$, where $\mathbf{a}$ is
the set of amplitudes $a_{k,\ell}, k=1, \ldots, M, \ell=1, \ldots, M$, defining the reference control \refeq{refcon}.
Then $Z^{\mathbf a}(t)$ is a skew-Hermitean matrix given by  $Z^{\mathbf a}(t) = \widetilde{X}^{\mathbf a}(t) (\widetilde{X}^{\mathbf a}(t) -I) (\widetilde{X}^{\mathbf a}(t)+ I)^{-3}$. Note that if $X_0 \in K$, where $K$ is the positively invariant compact set defined by \refeq{eK},
then $Z^{\mathbf a}(t)$ is uniformly bounded on $\mathbf{A} \times [0,\infty)$. Furthermore, as in the deterministic strategy, by uniqueness of solutions, if $\widetilde{X}_0 = \widetilde{X}(0) \neq I$, it is clear that $\widetilde{X}^{\mathbf a}(t) \neq I$ and $Z^{\mathbf a}(t) \neq 0$ for all $t \geq 0$. Let $Z_0 = Z^\mathbf{a}(0)$.

According to  \cite[Theorem 2(8.8c), p.\ 197]{Kushner71} and from properties \refeq{ec1c2}
 and \refeq{z2ineq} of Lemma \ref{lExp},
it suffices to show that there exists  $M>0$ such that $Q(\widetilde{X}_0) \geq M \| Z_0 \|^2$, for all $\widetilde{X}_0 \in K$, where $Q$ is defined by \refeq{eDefQ}. Assume the contrary. Then,
from \refeq{feedback} and \refeq{negdef}, one has that for all $n \in \NN$ with $n>0$, there exists $\widetilde{X}_{0n} \in K - \{I\}$ such that
\begin{equation}
 \label{eContrS}
  \int_{\mathbf{A}} \left[\int_{0}^{T}  \sum_{k=1}^{m} f_k \mbox{Tr}^2
\left( Z^{\mathbf a}_n(t) \widetilde{S}^{\mathbf a}
_k(t)\right) dt \right] d \mathbf{a}  = \int_{\mathbf{A}} \int_{0}^{T} \left[ \sum_{k=1}^{m} \left(\frac{\widetilde{u}_{k,n}^\mathbf{a} (t)}{\sqrt{f_k}} \right)^2 dt \right] d \mathbf{a}<   \frac{1}{n} \|Z_{0n}\|^2.
\end{equation}
Using \refeq{eContrS}, the Cauchy-Scharwz inequality (see \refeq{CS2} in Appendix \ref{D}),
and the fact that $\mbox{vol}\left( \mathbf {A} \times [0, T] \right)=
\mbox{vol}\left( \mathbf {A} \right) T$ (Lebesgue measure), one obtains:
 \begin{equation}
 \label{eutildea}
\int_{\mathbf{A}} \int_{0}^{t} \left| \left(\frac{\widetilde{u}_{k,n}^\mathbf{a} (t)}{\sqrt{f_k}} \right)\right|  dt \, d \mathbf{a} \leq  \int_{\mathbf{A}} \int_{0}^{T} \left| \left(\frac{\widetilde{u}_{k,n}^\mathbf{a} (t)}{\sqrt{f_k}} \right)\right|  dt \, d \mathbf{a} \leq \sqrt{\mbox{vol}(\mathbf A)T/n}  \| Z_{0n}\|, \quad t \in [0,T].
\end{equation}
Now, by the same reasoning that was used to obtain \refeq{eZn}, one may write
\begin{equation}
\label{eZna}
   \int_{\mathbf A} \left\| \dfrac{Z^{\mathbf a}_n(t) - Z_{0n}}{\|Z_{0n} \|} \right\| d{\mathbf a} = \left\| \int_{\mathbf A} \int_{0}^t \dfrac{\dot{Z}_n^{\mathbf a}(s)}{\|Z_{n0} \|} \, ds \, d{\mathbf a}\right\| \leq  \int_{\mathbf A} \sum_{k=1}^m D_k \int_{0}^{t} \frac{\left| \widetilde{u}_{k,n}^{\mathbf a}(s) \right|}{\| Z_{0n} \|} \, ds \, d {\mathbf a}, \quad t \in [0,T],
\end{equation}
where $D_k > 0$, and so by \refeq{eutildea} and \refeq{eZna} it follows that
\begin{equation*}
   \int_{\mathbf A} \left\| \dfrac{Z^{\mathbf a}_n(t) - Z_{0n}}{\|Z_{0n} \|} \right\| d{\mathbf a} \leq
  f D \sqrt{ \mbox{vol} (A) T/n}, \quad t \in [0, T],
\end{equation*}
where $D=D_1+ D_2+ \ldots + D_m > 0$ and $f = \max \{ \sqrt{f_1}, \ldots, \sqrt{f_m}\}  > 0$. From this last equation, one gets
\begin{equation}
   \label{L1}
   \int_{\mathbf A} \int_0^T \left\| \dfrac{Z^{\mathbf a}_n(t) - Z_{0n}}{\|Z_{0n} \|} \right\|  dt \, d{\mathbf a} \leq
  T f D \sqrt{ \mbox{vol} (A) T / n}.
\end{equation}

Up to a convenient subsequence, one may assume that the sequence $\frac{Z_{0n}}{\|Z_{0n} \|}$ converges
to some $Z^* \in \mathbf G$. Then \refeq{L1} implies that  $\frac{Z^{\mathbf a}_{n}(t)}{\|Z_{0n} \|}$
converges to $Z^*$ in the $L^1$ sense:
\begin{equation}
   \label{L1L2}
   \lim_{n \to \infty} \int_{\mathbf A} \int_0^T \left\| \dfrac{Z^{\mathbf a}_n(t)}{\|Z_{0n}  \|} - Z^* \right\| dt \, d{\mathbf a} = 0.
\end{equation}
Note that the sequence $Z^{\mathbf a}_{n}(t)$ is uniformly bounded on $\mathbf{A} \times [0, T]$ and that \refeq{eContrS} implies that
\begin{equation}
 \label{eContrS2}
  \int_{\mathbf{A}} \left[\int_{0}^{T}  \sum_{k=1}^{m} f_k \mbox{Tr}^2
\left( \frac{Z^{\mathbf a}_n(t)}{\|Z_{0n} \|} \widetilde{S}
_k^{\mathbf a}(t)\right) dt \right] d \mathbf{a}  <   \frac{1}{n}.
\end{equation}
Furthermore, note that $\mbox{Tr}^2
\left( \frac{Z^{\mathbf a}_n(t)}{\|Z_{0n} \|} \widetilde{S}_k(t)\right)$ is a sum of products
of the form
$z_{ij} \, z_{kl} \, \widetilde{s}_{pq} \, \widetilde{s}_{rs}$,
where  $z_{ij}$ denotes an element of $\dfrac{Z^{\mathbf a}_n(t)}{\|Z_{0n} \|}$ and
$\widetilde{s}_{pq}$ denotes an element of $\widetilde{S}_k^{\mathbf a}(t)$. From Lemma
\ref{D1} of Appendix \ref{D}, it follows that, by taking the limit
 $n \rightarrow \infty$, one may replace $\dfrac{Z^{\mathbf a}_n(t)}{\|Z_{0n} \|}$
by its limit $Z^* \in \mathbf G$ in the integral \refeq{eContrS2}:
\begin{equation*}
  \lim_{n\rightarrow\infty} \int_{\mathbf{A}}\int_{0}^{T}  \sum_{k=1}^{m} f_k \mbox{Tr}^2
\left( \frac{Z^{\mathbf a}_n(t)}{\|Z_{0n} \|} \widetilde{S}
_k^{\mathbf a}(t)\right) dt  \, d \mathbf{a}  = \int_{\mathbf{A}} \int_{0}^{T}  \sum_{k=1}^{m} f_k \mbox{Tr}^2
\left( Z^* \widetilde{S}
_k^{\mathbf a}(t)\right) dt  \, d \mathbf{a}.
\end{equation*}

Now note that, for a fixed $Z^* \in \mathbf G$, then \refeq{constantineq} holds for almost all
$\mathbf a \in \mathbf A$ (cf.\ Theorem~\ref{t1}). The continuous dependence of the $T$-periodic reference trajectory
$\overline{X} (t)$  with respect to the set of
parameters ${\mathbf a}$ implies that the $T$-periodic map $\widetilde{S}^{\mathbf a}_k(t)$
also depends continuously on $\mathbf a$, and hence\footnote{Note that in this
argument $Z^* \in \mathbf G$ is a fixed matrix.}
\[
\lim_{n\rightarrow\infty} \int_{\mathbf{A}} \int_{0}^{T}  \sum_{k=1}^{m} f_k \mbox{Tr}^2
\left( \frac{Z^{\mathbf a}_n(t)}{\|Z_{0n} \|} \widetilde{S}_k^{\mathbf a}(t)\right) dt \;  d \mathbf{a} > 0.
\]
However, \refeq{eContrS2} implies that the same limit is zero, which is a contradiction.
\end{proof}

\begin{remark}\label{worstdirec}
 The last proof and \refeq{z2ineq} imply that
 $Q({\widetilde X_0}) \geq M \| {\widetilde X_0} - I\|^2$. Note that
 this does not explain why the convergence of the stochastic method is faster.
  The authors believe that this is due to the following fact. Consider a realization
  of the stochastic method such that \refeq{Ma} holds in each step
  of both deterministic and stochastic methods.
  Note that $M_{\mathbf a}$ regards the worst direction, that is, for some  ${\widetilde X}_0={\widetilde X}_0^{\mathbf a}$ one has that $V({\widetilde X}_f) - V({\widetilde X}_0)$ is of order $M_{\mathbf a} \| {\widetilde X}_0 - I\|^2$. It seems that the ``worst direction'' ${\widetilde X}_{0}^{\mathbf a}$ is very
sensible to $\mathbf a$, assuring that the next aleatory steps will provide a compensation of the
speed just by varying the worst direction, and then providing a better speed in an average process.
\end{remark}

\section{Concluding Remarks}
\label{sConclusions}

In this work one has proposed a constructive solution of the
$T$-sampling stabilization problem for quantum systems on $\mbox{U}(n)$. It is easy to show\footnote{Note that
$\beta(X) = \det (X + I)$ is a polynomial function in the entries $X_{ij}$ of $X=(X_{ij})$, and as it is not identically zero, the set $\mathcal R
\subset \CC^{n\times n}$ of its roots is closed and it has zero Lebesque
measure zero (\cite{polynomial}).} that the complement $\mathcal{W}^c$ of the  set $\mathcal W$ in \eqref{eW} is closed with Lebesgue measure zero, and $\mathcal{W}$ is dense in $\mbox{U}(n)$.
It was also shown that, if one accepts a global phase change on $X_{goal}$ of the form $X_{goal} \in \mbox{U}(n) \mapsto \exp(\iota \phi)X_{goal} \in \mathcal{W} \subset \mbox{U}(n)$, which is transparent for quantum systems,
one may always obtain an equivalent $X_{goal}$ inside $\mathcal W$.
From
this perspective, the two steps procedure of Theorem \ref{tSteer}
for $T$-sampling stabilization may be completely avoided.
 However, note
from \refeq{feedback} that the feedbacks $\widetilde u_k$ tends
to infinity when the initial condition $\widetilde X(0)$ tends to
$\mathcal{W}^c$. For initial conditions that are close to
$\mathcal{W}^c$, simulations have shown a better compromise of the
convergence speed versus the maximum norm of the input when one
chooses the two-step procedure instead of the single one.

The approach of this paper is based on previous results of
\cite{SilPerRou09,SilPerRou12} which provided a solution of the $T$-sampling stabilization problem for controllable systems \eqref{cqs} on $\mbox{SU}(n)$ in a certain number of steps that may
grow with $n$. In the present work, a new Lyapunov
function $\mathcal{V}$ is defined. This new choice assures a complete and global solution of
the problem when the system is controllable, since $\mathcal{V}$ essentially decreases without singularities and with no nontrivial LaSalle's invariants.
The results of \cite{SilPerRou09}
are based on the existence of a special kind of $T$-periodic reference
trajectory which is generated by special inputs,
called here Coron reference controls\footnote{Their
existence is the heart of Coron's Return Method.}, in a way
that the linearized system is controllable along such trajectory.  In
\cite{SilPerRou09} it is not shown how to construct a trajectory with such
properties, and in \cite{SilPerRou12} it is indeed
constructed when the system obeys the $p$-controllability
condition. This paper relaxed such condition by assuming only that the system is controllable.
Another important contribution of this work was to show that,
for a number of frequencies $M > 0$ big enough, the inputs of the form  $\overline{u}_k(t)
= \sum_{\ell=1}^{M} a_{k,\ell} \sin ( 2 \pi t /T )$ generate Coron
reference controls with probability one with respect to a random
choice of the real coefficients $a_{k,\ell}$. This result implies that
the control laws required in the present work and in \cite{SilPerRou09} are completely constructible. Furthermore, one has established that the convergence
of the $T$-sampling stabilization problem is exponential for both methods, deterministic an stochastic.
It is important to point out that the speed of exponential convergence may be controlled if one chooses $\overline{T}=T/c$,
$\overline{f}_k = c f_k$, $\overline{\mathbf{a}}=c\mathbf{a}$, where $c > 0$. This is immediate from Remark~\ref{atp}.

The methods presented here could be adapted to controllable quantum models \eqref{cqs} that evolve on the special unitary group $\mbox{SU}(n)$ as well, but this will be the subject of a future work.

\section*{Acknowledgements}

The first author was fully supported by CAPES and FUNPESQUISA/UFSC.
The second author was partially supported by CNPq, FAPESP and
USP-COFECUB.
The third author was partially supported by ``Agence
Nationale de la Recherche'' (ANR), Projet Blanc EMAQS number ANR-2011-BS01-017-01,  and USP-COFECUB.

\appendix


\section{Compactness of $K$}
\label{aCompact}

\begin{proposition}
 The set  $K$ in \refeq{eK} is compact in $\mathcal{W}$.
\end{proposition}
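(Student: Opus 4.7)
The plan is to prove that $K$ is sequentially compact in $\mathbb{C}^{n\times n}$ with the Frobenius-norm topology, which suffices since we live in finite-dimensional Euclidean space. Boundedness is immediate: every $\widetilde X \in K$ lies in $\mathrm{U}(n)$, so $\|\widetilde X\|^2 = \mathrm{Tr}(\widetilde X^\dag \widetilde X) = \mathrm{Tr}(I) = n$. The real work is closedness, and the delicate point is that $\mathcal{V}$ is only defined on $\mathcal{W}$ and blows up near $\mathcal{W}^c = \{W \in \mathrm{U}(n) : -1 \in \mathrm{spec}(W)\}$. So we must rule out sequences in $K$ escaping to $\mathcal{W}^c$.

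The first step I would carry out is to establish the spectral formula \eqref{lyaptan}, namely
\[
\mathcal{V}(\widetilde X) = \sum_{j=1}^n \tan^2(\theta_j/2),
\]
where $e^{\iota\theta_1},\ldots,e^{\iota\theta_n}$ (with $\theta_j \in (-\pi,\pi)$) are the eigenvalues of $\widetilde X \in \mathcal{W}$. This is a direct unitary-diagonalization computation: write $\widetilde X = U^\dag D U$ with $D = \mathrm{diag}(e^{\iota\theta_j})$, observe that $\Upsilon(\widetilde X) = U^\dag \, \iota(D-I)(D+I)^{-1}\, U$ is already diagonalized in the same basis with diagonal entries $\iota \tan(\theta_j/2)$ (using $e^{\iota a}-1 = 2\iota\sin(a/2)e^{\iota a/2}$ and $e^{\iota a}+1 = 2\cos(a/2)e^{\iota a/2}$), and then take the trace of $\Upsilon\Upsilon^\dag$.

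Next, take an arbitrary sequence $\{\widetilde X_p\} \subset K$ with $\widetilde X_p \to X_\infty$ in $\mathbb{C}^{n\times n}$. Since $\mathrm{U}(n)$ is closed, $X_\infty \in \mathrm{U}(n)$. The main obstacle is to show that $X_\infty \in \mathcal{W}$, i.e.\ that $-1$ is not an eigenvalue of $X_\infty$. Suppose for contradiction that some eigenvalue of $X_\infty$ equals $-1$. By continuity of the spectrum of normal matrices (e.g.\ by an appropriate labeling of eigenvalues along the sequence, which is standard for unitary matrices), we can choose labels so that for some index $j$, $\theta_j^{(p)} \to \pm \pi$ as $p\to\infty$. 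Then $\tan^2(\theta_j^{(p)}/2) \to +\infty$, and since every term of the sum in the spectral formula is non-negative,
\[
\mathcal{V}(\widetilde X_p) \;\geq\; \tan^2\!\bigl(\theta_j^{(p)}/2\bigr) \;\longrightarrow\; +\infty.
\]
This contradicts $\mathcal{V}(\widetilde X_p) \leq \mathcal{V}(X_{goal}) < +\infty$, valid because $X_{goal} \in \mathcal{W}$ by assumption. Hence $X_\infty \in \mathcal{W}$.

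Finally, since $\mathcal{V}$ is continuous on $\mathcal{W}$ and $\mathcal{V}(\widetilde X_p)\le \mathcal{V}(X_{goal})$ for every $p$, we get $\mathcal{V}(X_\infty) \leq \mathcal{V}(X_{goal})$, so $X_\infty \in K$. Thus $K$ is closed in $\mathbb{C}^{n\times n}$; combined with boundedness, $K$ is compact. The only technical care needed is the continuous labeling of eigenvalues along the convergent unitary sequence, which can be handled via symmetric functions of the spectrum or, more concretely, by noting that any limit point of the multiset of spectra of $\widetilde X_p$ equals the multiset of eigenvalues of $X_\infty$; if $-1$ is among them, the blow-up of $\tan^2(\cdot/2)$ still forces $\mathcal{V}(\widetilde X_p)\to \infty$ along a subsequence, yielding the same contradiction.
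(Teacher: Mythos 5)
Your proof is correct and follows essentially the same route as the paper: reduce to closedness (via compactness/boundedness of $\mbox{U}(n)$), then rule out limit points in $\mathcal{W}^c$ by combining the spectral formula \eqref{lyaptan} with continuity of the spectrum, so that $\mathcal{V}$ would blow up along the sequence, contradicting the bound $\mathcal{V} \leq \mathcal{V}(X_{goal})$. Your added care about the labeling of eigenvalues (or arguing via the multiset of spectra) is a minor refinement of a point the paper passes over quickly, not a different argument.
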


\begin{proof}
As $\mbox{U}(n)$ is compact, it suffices to show that $K$ is
closed in $\mbox{U}(n)$. For this, assume that $W_k \in K$ is a sequence with
$\lim_{k \rightarrow \infty} W_k = \overline W \in \mbox{U}(n)$. If
$\overline W \in \mathcal W$, then the continuity of $\mathcal V$ in
$\mathcal W$ gives
$ \lim_{k \rightarrow \infty} \mathcal V(X_k) = \mathcal V ( \lim_{k \rightarrow
 \infty}X_k) = \mathcal V ( \overline W)$.
 As $\mathcal V(W_k) \leq
 \mathcal V(X_{goal})$, then one must have  $ \mathcal V ( \overline
 W)  \leq \mathcal V(X_{goal})$ and $\overline W \in K$. To conclude
 the proof, it suffices to show that $\overline W$ must be in $\mathcal W$.
For that purpose, suppose that at least one eigenvector of $\overline W$ is
 equal to $-1$. Recall that $\mathcal W$ is defined in \refeq{eW}, i.e.\ $W$ is the set of
all matrices $W \in \mbox{U}(n)$ that do not have any eigenvalue
equal to $-1$. Furthermore, one may always write $W \in \mbox{U}(n)$ as $W = U^\dag D U$, where $U$ unitary, $D
=\diag(\exp(\iota \theta_1), \ldots, \exp(\iota \theta_n))$ is a diagonal matrix, and $\exp(\iota \theta_j), j=1, \ldots, n$, are the eigenvalues of $W$. Using \refeq{lyap} and the invariance of the trace, it follows that
\begin{equation}\label{lyaptan}
    \mathcal{V}(W) = \sum_{j=1}^n \frac{(\exp(\iota \theta_j) -1)^2}{(\exp(\iota \theta_j)+1)^2}=\sum_{j=1}^n \{ \tan(\theta_j/2) \}^2, \quad \mbox{for } W \in \mathcal{W}.
\end{equation}
In particular, from the continuous dependence of the spectrum
$\sigma( W)$ with respect to $W$, if $W_k \rightarrow \overline W$
with $W_k \in K$, then one must have $\mathcal V ( W_k ) \rightarrow
+ \infty$, because at least one eigenvalue of $\overline W$ is
equal to $-1$, This contradicts the fact that $\mathcal V ( W_k ) \leq
\mathcal V(X_{goal})$ (see \refeq{eK}).
\end{proof}

\section{LaSalle's Invariance Theorem for Periodic Systems}\label{aLasalle}

LaSalle's invariance theorem also holds for periodic systems. It can
be regarded as a generalization of previous results of E.A.
Barbashin and N.N. Krasovski \cite[Theorem 1.3, p. 50]{RouHabLal}.
The following result is a complex version of
\cite[Theorem 3, p.\ 10]{LaSalle}.

\begin{theorem}
\label{tLaSalle}  Let $S \subset \CC^n$ be an open set and $T > 0$.
Consider that $f$:~$S \times \RR  \rightarrow \CC^n$ is continuously differentiable.
Assume that $f$
is $T$-periodic in $t$, that is $f(x,t+T)= f(x,t)$, for all $x \in S$ and $t \in
\RR$. Consider the system
\begin{equation}
\begin{array}{rcl}
 \label{sLaSalle}
 \dot x(t) & = & f(x(t),t), \smallskip \\
 x(t_0) & = & x_0.
 \end{array}
\end{equation}
Let $K \subset S$ be a compact set and suppose that $K$ is a
positively invariant set  of the dynamics \refeq{sLaSalle}. Let
$\mathcal V$:~$S \times \RR \rightarrow \RR$ be a $T$-periodic and continuously differentiable
function such that $\dot {\mathcal V} (x,t) = \frac{\partial \mathcal V(x,t)}{\partial x} f(x,t) + \frac{\partial
\mathcal V(x,t)}{\partial t}  \leq 0$, for all $x \in S$ and $t \in \RR$.
Let
 \[
E = \left\{ (\widehat x,t) \in  K \times [t_0, \infty) \; | \; \dot
{\mathcal V}(\widehat x,t) = 0\right\}.
 \]
Let $\mathcal M$ be the union of all the trajectories  $(x(t),t)$, $t
\geq t_0$, contained in $E$. Then, every solution $x(t)$ with initial
condition $x(t_0)= x_0 \in K$ asymptotically converges to $\mathcal M$, that is $\lim_{t
\rightarrow \infty}\mbox{dist} (x(t), \mathcal{M}) = 0$.
\end{theorem}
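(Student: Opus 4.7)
The natural approach is to reduce the periodic setting to the classical autonomous LaSalle invariance principle by means of a \emph{time-lift}. I would introduce the circle $\mathbb{T} = \RR/T\RR$ and the augmented state space $Y = S \times \mathbb{T}$. By $T$-periodicity of $f$ in the time variable, the vector field $F(x,\theta) = (f(x,\theta),\,1)$ is well defined and continuously differentiable on $Y$, making $\dot y = F(y)$ an autonomous system; an orbit $x(\cdot)$ of \refeq{sLaSalle} starting at $x_0\in K$ at time $t_0$ lifts to the orbit $y(t) = (x(t),\, t \bmod T)$ of $F$. Similarly, $T$-periodicity of $\mathcal V$ in $t$ lets $\mathcal V$ descend to a continuously differentiable function $\widehat{\mathcal V}:Y\to \RR$, and the sign hypothesis $\dot{\mathcal V}\le 0$ transfers to $D\widehat{\mathcal V}\cdot F\le 0$ on $Y$.

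I would then verify that $\widehat K := K\times\mathbb{T}$ is compact (a product of compact sets) and positively invariant for $F$: the $K$-factor by the standing hypothesis, the $\mathbb{T}$-factor trivially. This places the problem squarely in the hypotheses of the classical autonomous LaSalle invariance principle applied to $\widehat{\mathcal V}$ on $\widehat K$. That theorem yields that every solution $y(t)$ with $y(t_0)\in \widehat K$ converges to the largest $F$-invariant subset $\widehat{\mathcal M}$ of the zero locus $\{y \in \widehat K : D\widehat{\mathcal V}(y)\cdot F(y)=0\}$.

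Finally, I would unwind the lift. A subset of $\widehat K$ is $F$-invariant precisely when its preimage under the quotient $(x,t)\mapsto (x,\,t\bmod T)$ is a union of full trajectory graphs $\{(x(t),t)\}$ of \refeq{sLaSalle}. Under this correspondence, the zero locus of $D\widehat{\mathcal V}\cdot F$ on $\widehat K$ pulls back to $E$, and $\widehat{\mathcal M}$ pulls back to $\mathcal M$ (modulo the $T$-periodic identification of the time axis). Because $\widehat K$ is compact and the projection $Y \to S$ is continuous, convergence $y(t)\to \widehat{\mathcal M}$ in $\widehat K$ translates into $\mbox{dist}(x(t),\mathcal M)\to 0$, which is the claim. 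The main (but modest) technical step is exactly this identification of $F$-invariant subsets of $\widehat K$ with $T$-periodic families of trajectories of \refeq{sLaSalle}; everything else is direct bookkeeping and a single appeal to the standard autonomous LaSalle theorem.
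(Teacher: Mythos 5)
The paper does not actually prove Theorem~\ref{tLaSalle}: it simply cites LaSalle's 1966 report (and the Barbashin--Krasovskii antecedent) and uses the statement as a black box, so your argument is necessarily a different route --- and it is a correct and standard one. The time-lift to the autonomous system $\dot y=F(y)=(f(x,\theta),1)$ on $S\times\mathbb{T}$ is exactly the right reduction: $\widehat K=K\times\mathbb{T}$ is compact, and it is positively invariant provided one reads the hypothesis on $K$ as invariance from \emph{every} initial time $t_0$ (which is how it is used in the paper, since there invariance of $K$ comes from monotonicity of $\mathcal V$ and holds uniformly in $t_0$); this reading should be made explicit. Two further points deserve a sentence each in a full writeup. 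First, the classical autonomous LaSalle theorem is usually stated on $\RR^n$, whereas here the state space is the manifold $S\times\mathbb{T}$; either invoke the manifold version or note that the proof (nonemptiness, compactness and invariance of the $\omega$-limit set, constancy of $\widehat{\mathcal V}$ on it, hence $D\widehat{\mathcal V}\cdot F=0$ there) goes through verbatim on any locally compact metric space with a continuous flow. Second, the correspondence between $\widehat{\mathcal M}$ and $\mathcal M$ is an inclusion rather than an equality: the largest $F$-invariant subset of the zero locus consists of full orbits lying in that locus, and each such orbit, restricted to $t\ge t_0$ and unrolled in time, is a forward trajectory contained in $E$; hence the projection of $\widehat{\mathcal M}$ is contained in (the spatial projection of) $\mathcal M$, and convergence to $\widehat{\mathcal M}$ gives the stated convergence to $\mathcal M$ a fortiori. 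With these clarifications your proof is complete; it buys the reader a self-contained argument in place of the paper's external citation, at the cost of one appeal to the autonomous invariance principle.
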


\section{Proof of Theorem \ref{tProb1}}
\label{aProb1}

Assume that there exist a Coron reference control
$\widehat{u}_k(t)$, $k=1,\dots,m$. Let $u_k(t)$, $k=1,\dots,m$, be
any smooth $T$-periodic controls in \eqref{cqs} with initial
condition $I$ at $t=0$. Consider that such controls obey
\eqref{sinus}. In particular, each $u_k(t)$ is an odd function.

Along this proof, one lets
$\left(\frac {I}{J} \right)$ stand for the integer division of $I$ by $J$.
The proof follows easily from the arguments below:

(i) Using \eqref{defCkj}, one gets
\begin{align*}
 & C_k^0(t)=S_k,
 \; C_k^1(t)=\sum_{\ell_1} u_{\ell_1}(t) [S_k,S_{\ell_1}], \\
 & C_k^2(t)=\sum_{\ell_1} \dot{u}_{k}(t) [S_k,S_{\ell_1}] + \sum_{\ell_1,
\ell_2}
u_{\ell_1}(t) u_{\ell_2}(t) [[S_k,S_{\ell_1}],S_{\ell_2}].
\end{align*}
It is then easy to show by induction that
\begin{equation}
 \label{CI}
  C_k^{j}(0) = \sum_{I \in \Lambda_{k,j}} \beta_I(U_{j-1}) \mathcal{H}_I,
\end{equation}
where $\beta_I$ is a monomial in the variables
$ U_{j-1} = \{u_k^{(p)}(0) : k=1, \ldots, m, \; p =0,
  \ldots, j-1 \}$,  $\Lambda_{k,j} = \{1, \ldots , n_{kj} \} \subset \NN$ and
$\mathcal{H}_I \in \mathfrak{u}(n)$. In order to be consistent,
both $U_{-1}$ and $U_{0}$ will be taken as empty sets. Since each
$u_k(t)$ is an odd function, it is easy to verify that\footnote{The
derivative of an even function is an odd function and
\emph{vice-versa}. In particular, the derivative at zero of an even
function is zero.} $u_k^{(j)}(0) = 0$ for every even $j \geq 0$. In
particular, for any smooth $T$-periodic inputs obeying
\eqref{sinus}, for instance the Coron reference controls
$\widehat{u}_k$ and the $\overline{u}_k$'s given in \refeq{refcon},
one may restrict $U_{j-1}$ to the variables
\begin{equation}
 \label{Uj}
  U_{j-1} = \left\{u_k^{(p)}(0) : k=1, \ldots, m  \; \mbox{and} \; p = 2 i - 1,
  \; \mbox{for} \; i=1, \ldots , \left( \tfrac{j}{2} \right) \right\}.
\end{equation}

(ii) Let $\{ G_h : h =1, \ldots , d\}$ be a given basis of
$\mathfrak{u}(n)$ as a real vector space. For each $i=1, \dots, d$, let
$\pi_i$:~$\mathfrak{u}(n)
\rightarrow \RR$ be the linear map defined by $\pi_i( \sum_{h=1}^{d}
\alpha_h G_h) = \alpha_i$. Define the real $d$-square matrix $P=(P_{ih})$
elementwise by
\[
   P_{ih} = \pi_i(C_{k_h}^{j_h}(0)),
\]
where the indices $j_h, k_h$ are the ones given in \refeq{eGenerates} for the
Coron reference controls $\widehat{u}_k(t)$. Since the $\pi_i$ are linear,
from \refeq{CI} one gets
\[
  P_{ih} =  \sum_{I \in \Lambda_{k_h,j_h}} \beta_I(U_{j_h-1})
  \pi_i(\mathcal{H}_I).
\]
Note that  $\pi_i(\mathcal{H}_I) \in \RR$, for $I \in
\Lambda_{k_h, j_h}$. In particular the entries $P_{ih}$ are polynomial
functions in the variables $U_{j_h-1}$ defined by \refeq{Uj}. Therefore, $\det
P$ is a polynomial function in the variables
\[
U = \{u_k^{(1)}(0), u_k^{(3)}{(0)}, \ldots,  u_k^{(s)}{(0)} : k =1,
\ldots , m, \; s=2 M - 1 \},
\]
where $M=\left( \frac{J}{2} \right)$. Note that $\det P(U)$ is
nonzero when computed for the vector $U$ determined by the values
$u_k^{(s)}(0)=\widehat{u}_k^{(s)}(0)$ corresponding to the
Coron reference controls $\widehat{u}_k(t)$.

(iii) Recall that $M=\left(\frac{J}{2}\right)$ and $s=2 M -1 = 2 (\frac{J}{2}) -
1$. Let $\mathcal{U}_k$ and $\widetilde{a}_k$ be the $M$-dimensional
column vectors in $\RR^M$ defined respectively as
\[
 \mathcal{U}_k = (u_k^{(1)}(0), u_k^{(3)}{(0)}, \ldots,
 u_k^{(s)}{(0)}), \qquad
 \widetilde{a}_k = (a_{k1}, a_{k2}, \ldots, a_{kM}).
\]
Now, take $u_k(t)=\overline{u}_k(t)$ as the ones given by
\refeq{refcon}. Then, it is easy to show that
\[
 \mathcal{U}_k = V_k \widetilde{a}_k,
\]
where $V_k = V$ is the real $M\times M$ matrix given by
\[
V = \left[
\begin{array}{cccc}
\omega     &   2   \omega    &    \ldots &     M \omega \\
-\omega^3   &  -2^3 \omega^3   &    \ldots  &    -M^3 \omega^3 \\
\omega^5   &  2^5 \omega^5   &    \ldots  &    M^5 \omega^5 \\
\vdots     &  \vdots         &    \vdots  &    \vdots \\
(-1)^{\left( \frac{s}{2} \right)}\omega^s   &  (-1)^{\left(
\frac{s}{2} \right)} 2^s \omega^s   & \ldots & (-1)^{\left(
\frac{s}{2} \right)} M^s \omega^s
\end{array}
 \right],
\]
where $\omega = 2\pi /T > 0$.
By dividing the first column by $\omega$, the second
column by $-2 \omega$ and so on, one gets the Vandermonde matrix
\[
V_1= \left[
\begin{array}{cccc}
1              &   1                   &    \ldots &     1 \\
\omega^2       &  2^2 \omega^2         &    \ldots &  M^2 \omega^2 \\
\omega^4       &  2^4 \omega^4         &    \ldots & M^4 \omega^4\\
\vdots         &  \vdots               &    \vdots & \vdots \\
\omega^{s-1} &  2^{s-1} \omega^{s-1} & \ldots & M^{s-1} \omega^{s-1}
\end{array}
\right].
\]
In particular, $\det V \neq 0$.

(iv) Consider the $mM$-column vectors $U = (\mathcal{U}_k : k
=1, \ldots, m)$ and $\widetilde{a} = ({\widetilde a}_k : k =1, \ldots,
m)$. Then,
\[
 U = V_B \widetilde{a},
\]
where $V_B$ is a block diagonal matrix with diagonal entries
given by $V$. As $V$ is nonsingular,  it is possible to choose the
$a_{k,\ell}$'s in \refeq{refcon} in a way that the vector $U$, when computed for
$u_k(t)=\overline{u}_k(t)$ given in \refeq{refcon}, coincides with the
vector $U$ computed for the Coron reference control $\widehat{u}_k(t)$. Hence
$\det P( \mathcal{V_B} {\widetilde a}) \neq 0$
for such choice of the $a_{k\ell}$'s, which will be denoted as
${\overline{ a}}_{k\ell}$, $k=1, \ldots, m$, $\ell=1, \ldots, M$.

(v)  Recall that the set of zeros  of a nonzero polynomial function
$\mathcal P$:~$\RR^{m M} \rightarrow \RR$ has zero Lebesgue measure,
with an open and dense complement\footnote{For an elementary proof
of this fact, see e.g. \cite{polynomial}. This result holds in fact for
analytic functions in a much more general situation (see \cite{analytic}).}.
Now, regarding $\det P(V_B {\widetilde a})$
as a polynomial function in the variables $a_{k\ell}$ of ${\widetilde a}$,
since this function is nonzero when computed for the
${\overline{a}}_{k\ell}$'s constructed above, one has proved the theorem.
Note that if one takes $M > (\frac{J}{2})$, the same proof applies. The only difference is that $V_B$ is no longer a square matrix, but it has full row rank.

\section{Cauchy-Schwarz inequality for Lebesgue measure}
\label{D}

In this appendix one recalls some properties of measurable functions. Let $S \subset \RR^n$ be
a Lebesque measurable subset with finite measure $\mbox{vol} (S)$. Let $f$:~$S \rightarrow \RR$ and $g$:~$S \rightarrow \RR$ be two measurable
functions. Then the well-known Cauchy-Schwarz inequality says that
\begin{equation}
 \label{CS}
 \int_S | f(x) g(x) | dx \leq \sqrt{\int_S f^2(x) dx} \sqrt{\int_S g^2(x) dx}.
\end{equation}
Talking $f \equiv 1$  one gets
\begin{equation}
 \label{CS2}
 \int_S  |g(x)|  dx \leq  \sqrt{\mbox{vol} (S) \int_S g^2(x) dx}.
\end{equation}
Assume that $h$:~$S \rightarrow \RR$ is measurable with $|h(x)| \leq M$ uniformly on $S$. Since $h^2(x) = | h (x) |^2 \leq M | h(x) |$ uniformly on $S$, one gets
 \begin{equation}
 \label{CS3}
 \int_S  h^2(x)  dx \leq  M \int_S |h(x)| dx.
\end{equation}
The following lemma is useful:
\begin{lemma}
 \label{D1}
Assume that $f_n$:~$S \rightarrow \RR$ and  $g_n$:~$S \rightarrow \RR$ is a uniformly bounded sequence of measurable functions that converge in the $L^1$ sense to
uniformly bounded measurable functions, that is, there exist
functions $\phi$:~$S \rightarrow \RR$ and $\gamma$:~$S \rightarrow \RR$ such that
\begin{equation*}
 \lim_{n\rightarrow\infty} \int_S | f_n(x) - \phi(x) | \, dx = \lim_{n\rightarrow\infty} \int_S | g_n(x) - \gamma(x) | \, dx  =  0,
 \end{equation*}
where $\phi$:~$S \rightarrow \RR$ and $\gamma$:~$S \rightarrow \RR$  are
measurable and uniformly bounded. Then, for every measurable and uniformly
bounded function $v$:~$S \rightarrow \RR$, the following properties hold:
\begin{eqnarray*}
\label{aP} \lim_{n\rightarrow\infty} \int_S  f_n(x) v(x) dx & = & \int_S  \phi(x) v(x) \, dx, \\
\label{bP} \lim_{n\rightarrow\infty} \int_S  f_n(x) g_n(x) v(x) \, dx & = & \int_S  \phi(x) \gamma(x) v(x) \, dx.
\end{eqnarray*}
\end{lemma}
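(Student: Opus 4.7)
The plan is to handle the two conclusions separately, with the second reduced to the first by a standard ``add and subtract'' trick. Throughout, let $M_f, M_g, M_\phi, M_\gamma, M_v > 0$ denote uniform bounds for $|f_n|, |g_n|, |\phi|, |\gamma|, |v|$ on $S$, which exist by hypothesis.

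For the first identity, I would simply dominate the difference by the $L^1$ error times the sup-norm of the weight. Since $v$ is measurable and bounded,
\[
\left| \int_S f_n(x) v(x)\, dx - \int_S \phi(x) v(x)\, dx \right|
\leq \int_S |f_n(x) - \phi(x)|\, |v(x)|\, dx
\leq M_v \int_S |f_n(x) - \phi(x)|\, dx,
\]
which tends to zero by the $L^1$ convergence assumption. Note that all integrals exist because products of measurable uniformly bounded functions on a set of finite measure are integrable.

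For the second identity, I would decompose
\[
f_n g_n - \phi \gamma = (f_n - \phi)\, g_n + \phi\, (g_n - \gamma),
\]
multiply by $v$, and apply the triangle inequality to obtain
\[
\left| \int_S (f_n g_n - \phi \gamma) v\, dx \right|
\leq M_g M_v \int_S |f_n - \phi|\, dx + M_\phi M_v \int_S |g_n - \gamma|\, dx.
\]
Both terms tend to zero by hypothesis, completing the proof.

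There is no real obstacle here; the only subtle point is that one uses the uniform bound on $g_n$ (and not merely on $\gamma$) to control the first summand after the decomposition, which is exactly why the lemma is stated with $\{g_n\}$ itself uniformly bounded rather than only $\gamma$. The uniform bound on $\phi$ (resp.\ on $v$) plays the symmetric role in the second summand (resp.\ in both). Finite measure of $S$ is not explicitly needed in the argument once the $L^1$ hypotheses are in hand, although it guarantees the integrability assumptions used implicitly.
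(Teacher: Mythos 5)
Your proof is correct, and the first claim is handled exactly as in the paper. For the second claim, however, you take a genuinely different (and slightly leaner) route. You use the two-term telescoping decomposition $f_n g_n - \phi\gamma = (f_n-\phi)g_n + \phi(g_n-\gamma)$ and control each summand directly by the sup-norm of the bounded factor times the $L^1$ error, so only the $L^1$ hypotheses and the uniform bounds are needed. The paper instead uses the symmetric three-term decomposition $f_n g_n - \phi\gamma = (f_n-\phi)(g_n-\gamma) + \phi(g_n-\gamma) + \gamma(f_n-\phi)$, whose cross term $(f_n-\phi)(g_n-\gamma)$ cannot be absorbed by a single sup-norm; the paper handles it by first upgrading $L^1$ convergence to $L^2$ convergence via the inequality $\int_S h^2 \le M\int_S |h|$ for uniformly bounded $h$ (its \refeq{CS3}), and then applying the Cauchy--Schwarz inequality \refeq{CS}. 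Your version avoids Cauchy--Schwarz altogether and makes explicit where the uniform bound on the sequence $g_n$ itself (as opposed to only its limit $\gamma$) is used, which is a worthwhile observation; the paper's version, while longer, keeps the two factors on a symmetric footing and showcases the $L^1\Rightarrow L^2$ upgrade that is reused implicitly elsewhere. Both arguments rely on exactly the same hypotheses, so nothing is lost either way.
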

\begin{proof} This is a standard result in functional analysis.
For simplicity, assume that $M>0$ is a uniform bound for all the functions $\phi, \gamma, v, f_n, g_n$, $n \in \NN$.
To show the first claim, it suffices to see that (one abuses notation for simplicity)
$| \int f_n v - \phi v | \leq \int |v| |f_n - \phi|  \leq M \int |f_n- \phi|$.
To show the second claim,
one shows first that $\int_S |f_n g_n - \phi \gamma | dx \rightarrow 0$.
For this, note that
\begin{eqnarray*}
|f_n g_n - \phi \gamma | & = & |(f_n - \phi) (g_n - \gamma) +  \phi(g_n- \gamma) + \gamma (f_n - \phi) | \\
& \leq & |(f_n - \phi) (g_n - \gamma)| + |\phi| | g_n- \gamma| + |\gamma| | f_n - \phi|.
 \end{eqnarray*}
Furthermore, note that \refeq{CS3} and the $L_1$ convergence assumption imply $L_2$ convergence, that is, $\int_S |f_n  - \phi |^2 dx \rightarrow 0$ and $\int_S |g_n  - \gamma |^2 dx \rightarrow 0$.
Then, the fact that $\int_S |f_n g_n - \phi \gamma | dx \rightarrow 0$ follows easily from
 Cauchy-Scharwz inequality \refeq{CS} and from the fact
that the functions $\phi, \gamma$ are uniformly bounded. Since $v$ is uniformly bounded,
 the second claim follows
easily from the first one.
\end{proof}

\section{Stochastic Lyapunov Stability
Result}\label{stochasticinvariance}

\begin{theorem}\label{stochastic}
\cite[Theorem 1, p.\ 195]{Kushner71} Let $\Omega$ be a probability
space and let $\mathcal{W}$ be a measurable space.
Consider that $X_j$:~$\Omega \rightarrow \mathcal{W}$, $j \in \NN$,
is a Markov chain with respect to the natural filtration. Let
$Q$:~$\mathcal{W} \rightarrow \RR$ and
$\mathcal{V}$:~$\mathcal{W} \rightarrow \RR$ be measurable
non-negative functions with $\mathcal{V}(X_j)$ integrable for all $j
\in \NN$. If
\[
  \mathbb{E}\left(\mathcal{V}(X_{j+1}) / X_{j}\right) - \mathcal{V}(X_j) = - Q(X_j), \quad
\mbox{for } j \in \NN,
\]
then $\lim_{j \to \infty} Q(X_j) = 0$ almost surely.
\end{theorem}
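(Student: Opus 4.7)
The plan is to exploit the structure of the conditional increment relation to construct a non-negative martingale, apply a martingale convergence theorem, and deduce the almost sure summability of $Q(X_j)$, which immediately yields $Q(X_j) \to 0$ almost surely.

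First, I would define the auxiliary process
\[
  M_j = \mathcal{V}(X_j) + \sum_{i=0}^{j-1} Q(X_i), \quad j \in \NN,
\]
with the empty sum taken as zero. Using the Markov property together with the hypothesis $\mathbb{E}(\mathcal{V}(X_{j+1})/X_j) = \mathcal{V}(X_j) - Q(X_j)$, a direct computation gives $\mathbb{E}(M_{j+1}/\mathcal{F}_j) = \mathcal{V}(X_j) - Q(X_j) + \sum_{i=0}^{j} Q(X_i) = M_j$, so $M_j$ is a martingale with respect to the natural filtration $\mathcal{F}_j = \sigma(X_0, \dots, X_j)$. Since both $\mathcal{V}$ and $Q$ are non-negative, the process is non-negative, and taking expectations along the martingale identity yields $\mathbb{E}(M_j) = \mathbb{E}(\mathcal{V}(X_0))$, which is finite by the integrability hypothesis.

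Next, I would apply Doob's martingale convergence theorem to the non-negative (hence $L^1$-bounded) martingale $M_j$ to obtain a random variable $M_\infty \geq 0$ with $M_j \to M_\infty$ almost surely. Since the partial sums $\sum_{i=0}^{j-1} Q(X_i)$ are monotone non-decreasing (as $Q \geq 0$) and dominated by $M_j$ (as $\mathcal{V}(X_j) \geq 0$), they must also converge almost surely to a finite limit. In particular, $\sum_{j=0}^{\infty} Q(X_j) < \infty$ almost surely, and thus $Q(X_j) \to 0$ almost surely, which is the desired conclusion.

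The hypotheses of the martingale convergence theorem are easily verified: non-negativity yields the uniform bound $\mathbb{E}(|M_j|) = \mathbb{E}(M_j) = \mathbb{E}(\mathcal{V}(X_0)) < \infty$. A more direct alternative, avoiding the auxiliary process entirely, is to take unconditional expectations of the given increment relation and telescope to obtain
\[
  \sum_{j=0}^{N} \mathbb{E}(Q(X_j)) = \mathbb{E}(\mathcal{V}(X_0)) - \mathbb{E}(\mathcal{V}(X_{N+1})) \leq \mathbb{E}(\mathcal{V}(X_0)),
\]
and then apply Tonelli's theorem to swap the sum and expectation, giving $\sum_j Q(X_j) < \infty$ almost surely. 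I expect the only genuine subtlety to be the interpretation of $\mathbb{E}(\mathcal{V}(X_{j+1})/X_j)$ as conditioning on the natural filtration $\mathcal{F}_j$ rather than on $X_j$ alone; the Markov assumption makes these coincide, but the identification must be invoked explicitly when verifying the martingale identity above.
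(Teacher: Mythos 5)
Your proof is correct. Note that the paper itself offers no proof of this statement: it is imported verbatim from Kushner's book (Theorem 1, p.~195 of \cite{Kushner71}), so there is no argument in the text to compare against; your construction of the non-negative martingale $M_j = \mathcal{V}(X_j) + \sum_{i<j} Q(X_i)$, followed by Doob convergence and monotonicity of the partial sums, is the standard route and is essentially Kushner's own. The two points you flag — the identification of $\mathbb{E}(\mathcal{V}(X_{j+1})/X_j)$ with conditioning on $\mathcal{F}_j$ via the Markov property, and the integrability of $Q(X_j)$ (which follows from $0 \leq Q(X_j) \leq \mathcal{V}(X_j)$, since the conditional expectation of the non-negative $\mathcal{V}(X_{j+1})$ is non-negative) — are handled adequately, and your telescoping-plus-Tonelli alternative is an even more elementary way to reach $\sum_j Q(X_j) < \infty$ almost surely.
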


\def\cprime{$'$}

\end{document}